\numberwithin{equation}{section}
\newtheorem{theorem}{Theorem}[section]
\newtheorem{definition}[theorem]{Definition}
\newtheorem{lemma}[theorem]{Lemma}
\newtheorem{proposition}[theorem]{Proposition}
\newtheorem{remark}[theorem]{Remark}
\newtheorem{example}[theorem]{Example}
\newtheorem{claim}{Claim}
\numberwithin{equation}{section}
\def\al{\alpha}
\def\be{\beta}
\def\ga{\gamma}
\newcommand{\N}{{\mathbb N}}
\newcommand{\C}{{\mathbb C}}
\newcommand{\Z}{{\mathbb Z}}
\def\C{{\mathbb C}}
\def\GG{\mathcal{G}}
\def\UU{\mathcal{U}}
\def\VV{\mathcal{V}}
\def\DD{\mathcal{D}}
\def\HH{\mathcal{H}}
\def\RR{\mathcal{R}}
\def\beq{\begin{eqnarray}}
\def\eeq{\end{eqnarray}}
\def\beqs{\begin{eqnarray*}}
\def\eeqs{\end{eqnarray*}}
\def\l{\lambda}
\begin{document}
\title[Tensor product weight modules]
{Tensor product weight modules for the mirror-twisted Heisenberg-Virasoro algebra}
  \author{Dongfang Gao}
  \address{D. Gao: School of Mathematical Sciences, University of Science and Technology of China,
  Hefei, Anhui 230026, P. R. China}
  \email{gaodfw@mail.ustc.edu.cn}
  \author{Kaiming Zhao}
  \address{K. Zhao: Department of Mathematics, Wilfrid Laurier University,
  Waterloo, ON N2L 3C5, Canada}
  \email{kzhao@wlu.ca}
  \keywords{mirror-twisted Heisenberg-Virasoro algebra, tensor product,highest weight module,
  module of intermediate series, irreducible module}
  \subjclass[2000]{17B10, 17B20,17B65,17B66,17B68}

\maketitle
\begin{abstract} In this paper, we study irreducible weight modules with infinite dimensional weight spaces over the mirror-twisted Heisenberg-Virasoro
algebra $\DD$. More precisely,
the necessary and sufficient conditions for the tensor products of irreducible highest weight modules
and irreducible modules of intermediates series over  $\DD$ to be irreducible are determined by using ``shifting technique".
This leads to a family of new irreducible weight modules over  $\DD$.
Then we   obtain that any two
such tensor products are isomorphic if and only if the corresponding
highest weight modules and modules of intermediate series are
isomorphic respectively.
Also we   discuss submodules of the tensor product module when it is not irreducible.
\end{abstract}

\section{Introduction}\label{intro}
It is well-known that the Virasoro algebra $\VV$ is one of the most important Lie algebras
in mathematics and in mathematical physics, because of its widespread applications, for example,  in  quantum physics (see \cite{GO}),
conformal field theory (see \cite{DMS}), vertex operator algebras (see \cite{DMZ,FZ}), and so on.
Many other interesting and important algebras are closely related to the Virasoro algebra, such as the  Schr{\"o}dinger-Virasoro algebra (see \cite{H,H1}), the Heisenberg-Virasoro algebra (see \cite{ACKP, LZ1}) and the mirror-twisted Heisenberg-Virasoro algebra $\DD$ (see \cite{B, LPXZ}) which is the even part of the mirror-twisted $N=2$ superconformal algebra (see \cite{B}).
The mirror-twisted Heisenberg-Virasoro algebra $\DD$  has a   nice structure (see Definition \ref{def.1})
which is similar to the Heisenberg-Virasoro algebra.
This algebra is the main object that we concern in this paper.

Highest weight modules and Harish-Chandra modules over many Lie algebras with triangular decompositions
have been  the most popular modules in representation theory. These modules are well understood for many infinite dimensional Lie algebras, for example,
the Virasoro algebra in \cite{A,FF,M},  the Heisenberg-Virasoro algebra  in \cite{ACKP, LZ1},
the Schr{\"o}dinger-Virasoro algebra in \cite{LS,TZ},
and the mirror-twisted Heisenberg-Virasoro algebra in \cite{LPXZ}.

For the last two decades more and more mathematicians are  interested in
irreducible weight modules with infinite dimensional weight spaces, see for example, \cite{CP, CM, Z}.
An efficient way to obtain irreducible weight module with infinite dimensional weight spaces is the  tensor product of irreducible highest weight modules
and modules of intermediate series (or loop modules), see for example,  \cite{CP, GZ} for affine Kac-Moody algebras,
\cite{CGZ,Z} for the Virasoro algebra,    \cite{LZ2} for the Heisenberg-Virasoro algebra.
But it is generally not easy to determine the conditions for the tensor product to be irreducible (see \cite{CGZ, LZ, LZ2}).
In the present paper, we will  determine the necessary and sufficient conditions for the tensor product of an
irreducible highest weight module and an irreducible module of intermediate series over $\DD$ to be irreducible.

The paper is organized as follows. In Section 2, we recall    notations related to the mirror-twisted Heisenberg-Virasoro algebra $\DD$
and collect some known results on highest weight modules and module of intermediates series over $\DD$, including also two important lemmas on  tensor product modules for later use.
In Section 3, we determine the necessary and sufficient conditions for the tensor product of an
irreducible highest weight module and  an irreducible module of intermediate series over $\DD$ to be irreducible. Consequently we obtain  a lot of new irreducible weight modules with infinite dimensional weight spaces over  $\DD$, see Theorem \ref{main.15}.
In Section 4, we show that  two such tensor product modules to be isomorphic
if and only if the corresponding highest weight modules and modules of intermediate series are
isomorphic respectively, see Theorem \ref{main.17}.
In Section 5, we apply the results established in Section 4  to construct
irreducible weight $\DD$-modules with infinite dimensional weight spaces.
In these examples, when the tensor product module is not irreducible we also discuss  its submodules.

Throughout this paper, we denote by $\Z,\N,\Z_+, \C$ and $\C^*$
the sets of integers, positive integers, non-negative integers, complex numbers and nonzero complex numbers respectively. All vector spaces and Lie algebras are over $\C$. We denote by $\UU{(\GG)}$ the universal enveloping algebra for a Lie algebra $\GG$.

\section{Notations and preliminaries}\label{pre}
In this section we recall some notations and collect some results for the future convenience.
\begin{definition}\label{def.1}
The {\bf mirror-twisted Heisenberg-Virasoro algebra} $\DD$ is a Lie algebra with basis
$\{d_m, h_r,{\bf{c}},{\bf{l}}|m\in\Z, r\in \frac{1}{2}+\Z\}$ defined by the following commutation relations
\begin{equation*}
\begin{split}
&[d_m, d_n]=(m-n)d_{m+n}+\frac{m^3-m}{12}{\delta}_{m+n, 0}{{\bf{c}}},\\
&[d_m, h_r]=-rh_{m+r},                 \\
&[h_r, h_s]=r{\delta}_{r+s, 0}{\bf{l}},\\
&[{\bf{c}},\DD]=[{\bf{l}},\DD]=0,
\end{split}
\end{equation*}
for $m,n\in\Z, r,s\in \frac{1}{2}+\Z$.
\end{definition}
The Lie subalgebra spanned by $\{d_m, {\bf{c}}| m\in\Z \}$ is the Virasoro algebra $\VV$,
and the Lie subalgebra spanned by $\{h_r, {\bf{l}}|r\in \frac{1}{2}+\Z \}$
is the twisted Heisenberg algebra $\HH$.
Moreover, $\HH$ is an ideal of $\DD$
and $\DD$ is the semi-direct product of $\VV$ and $\HH$.
It is clear that $\DD, \VV, \HH$ are all $\Z$-graded Lie algebras, and their universal enveloping algebras are also $\Z$-graded. The homogeneous spaces of $\UU(\DD)$ are actually the eigenspaces with respect to $-d_0$. We call the eigenvalue of  a  homogeneous element the {\bf degree}, say $\deg(x)=k$ if $x\in \UU(\DD)_k$ for $k\in\frac{\Z}2$.

 Note that $\DD$  has the triangular decomposition:
\begin{equation*}
\DD=\DD^+\oplus\DD^0\oplus\DD^- ,
\end{equation*}
where
\begin{equation*}
\DD^{\pm}=\bigoplus_{n\in\N}\C d_{\pm n}\oplus\bigoplus_{r\in\frac{1}{2}+\Z_+}\C h_{\pm r},
\quad\DD^0=\C d_0\oplus\C{\bf{c}}\oplus\C{\bf{l}}.
\end{equation*}

Similar, $\VV$ and $\HH$ also have the triangular decompositions.
For convenience, we define the following notations for $r,s\in\Z_+,k\in\frac{\Z}{2}, z\in\C$,
\begin{equation*}
\begin{aligned}
&\VV^{(r)}=\bigoplus_{i=r+1}^{\infty}\C d_i+\C{\bf{c}},
\quad\DD^{(r,s)}=\bigoplus_{i=r+1}^{\infty}\C d_i\oplus
\bigoplus_{j=s}^{\infty}\C h_{\frac{1}{2}+j}+\C{\bf{c}}+\C{\bf{l}},\\
&\delta_{z\in \Z}=\begin{cases}&1 \quad\text{ if } z\in \Z\\ &0 \quad\text{ if } z\notin \Z,\end{cases}
 \  \  \,\,\,\,  \delta_{z\notin \Z}=\begin{cases}&0 \quad\text{ if } z\in \Z\\ &1 \quad\text{ if } z\notin \Z.\end{cases}
\end{aligned}
\end{equation*}

\begin{definition}
A $\DD$-module $V$ is called a $\bf{weight}$ $\bf{module}$ if $V=\oplus_{\l\in\C}V_{\l}$,
where $V_{\l}=\{v\in V|d_0v=\l v\}$. The subspace  $V_{\l}$ is called the {\bf weight space} of $V$ corresponding to the weight $\l$.
\end{definition}

\begin{definition}
Let $\GG=\oplus_{i\in\Z}{\GG}_{i}$ be a $\Z$-graded Lie algebra. A $\GG$-module $V$ is called  the $\bf{restricted}$ module if for any $v\in V$
there exists $n\in\N$ such that ${\GG}_iv=0$, for $i>n$.
The  category of restricted modules over  $\GG$ will be denoted as  $\RR_{\GG}$.
\end{definition}

Note that if $V$ is a $\VV$-module,
then $V$ can be easily viewed as a $\DD$-module by defining $\HH V=0$,
the resulting  module is denoted by $V^{\DD}$.
Thanks to \cite{FLM}, for any $H\in\RR_{\HH}$ with the action of $\bf l$ as a nonzero scalar $l$, we can give $H$ a $\DD$-module structure
denoted by $H^{{\DD}}$ via the following map
\begin{eqnarray}\label{Vertex}
&&d_n\mapsto \frac{1}{2l}\sum_{k\in\Z+\frac{1}{2}}h_{n-k}h_k,\quad\forall n\in\Z, n\neq0,\\
&&d_0\mapsto \frac{1}{2l}\sum_{k\in\Z+\frac{1}{2}}h_{-|k|}h_{|k|}+\frac{1}{16},\\
&&h_r\mapsto h_r,\quad\forall r\in\frac{1}{2}+\Z,
\quad {\bf{c}} \mapsto 1,
\quad {\bf{l}}\mapsto l.
\end{eqnarray}

\subsection{Modules of intermediate series}
\begin{definition}
A weight $\DD$-module $V$ is called a module of $\bf{intermediate}$ $\bf{series}$
if all its nonzero weight spaces are one-dimensional.
\end{definition}
Recall that for all $\al,\be\in\C \text{ and } \ga\in\C^*$,
$A(\al,\be,\ga)=\oplus_{k\in\frac{1}{2}\Z}\C v_k$ is an irreducible $\DD$-module with the actions defined by
\begin{equation}\label{A(a,b,g)}
\begin{split}
&d_mv_k=(\al+\be m-k)v_{m+k},\\
&h_rv_n=v_{n+r},\\
&h_rv_s=\ga v_{r+s},\\
&{\bf{c}}v_k={\bf{l}}v_k=0,
\end{split}
\end{equation}
for ${m,n}\in\Z$, $r,s\in \frac{1}{2}+\Z$, $k\in\frac{1}{2}\Z$.

Let us recall the  $\VV$-module of the intermediate series $A(\al,\be)=\oplus_{k\in\Z}\C v_k$:
\begin{equation}\label{A(a,b)}d_mv_k=(\al+\be m-k)v_{m+k},\,\,\,{\bf{c}}v_k=0,
\end{equation}
for ${m,k}\in\Z$.
We denote by $A'(\al,\be)$ the unique nontrivial irreducible subquotient module of $A(\al,\be)$.
It is clear that $A(\al,\be)^{\DD}$ is an irreducible $\DD$-module
if and only if $A(\al,\be)$ is an irreducible $\VV$-module.
Moreover, we have the following well-known results:
\begin{lemma}\label{main.1}
\begin{enumerate}[$(1)$]
\item The $\VV$-module $A(\al,\be)$ is isomorphic to $\VV$-module $A(\al+m,\be)$ for $m\in\Z$.
\item The $\VV$-module $A(\al,\be)$ is irreducible if and only if
$\al\notin\Z$ or $\be\not=0,-1$.
\item If $\al\in\Z$, $\be=0$, then $A(\al,\be)$ has a unique nontrivial irreducible quotient module.
\item If $\al\in\Z$, $\be=-1$, then $A(\al,\be)$ has a unique nontrivial irreducible submodule.
\item  $A'(\al,0)\cong A'(\al,-1)$ for $\al\in\C$.
\end{enumerate}
\end{lemma}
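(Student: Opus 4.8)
The plan rests on one structural observation: the weights $\al-k$ of the basis vectors $v_k$ are pairwise distinct as $k$ ranges over $\Z$, so every submodule $W\subseteq A(\al,\be)$ is again a weight module and, each nonzero weight space being one-dimensional, must have the monomial form $W=\bigoplus_{k\in S}\C v_k$ for some $S\subseteq\Z$. Consequently all five statements reduce to combinatorics on $\Z$ governed by the single scalar appearing in \eqref{A(a,b)}: the operator $d_{j-k}$ sends $v_k$ to a nonzero multiple of $v_j$ unless $\al+\be(j-k)-k=0$. For part $(1)$ I would write down the weight-matching bijection $v_k\mapsto v_{k+m}$, which is forced because $v_k$ has weight $\al-k$ in $A(\al,\be)$ while the target vector has weight $(\al+m)-k'$ in $A(\al+m,\be)$, and verify by direct substitution into \eqref{A(a,b)} that it intertwines every $d_n$. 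This also lets me normalize $\al=0$ whenever $\al\in\Z$ in the remaining parts.

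The core is part $(2)$. Given a nonzero submodule $\bigoplus_{k\in S}\C v_k$ and $k\in S$, I distinguish two regimes. When $\be\neq0$, for each fixed source $k$ the coefficient $\al+\be(j-k)-k$ vanishes for at most one target $j_{0}(k)$, so $S$ must contain every index except possibly $j_0(k)$; if $S$ is proper it omits exactly one index, and demanding that this omitted index be the same for all $k\in S$ forces $j_0(k)$ to be constant in $k$, which occurs precisely for $\be=-1$ and pins the omitted index to $\al$, hence $\al\in\Z$. When $\be=0$ the coefficient $\al-k$ depends only on the source and vanishes iff $k=\al$, so the only possible proper submodule is $\C v_\al$, existing iff $\al\in\Z$. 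Combining the two regimes gives irreducibility exactly when $\al\notin\Z$ or $\be\notin\{0,-1\}$. I expect this bookkeeping to be the main obstacle: specifically, the step showing that a proper submodule can omit at most one weight and that the omitted weight must be common to all surviving $k$ is what simultaneously forces $\be\in\{0,-1\}$ and $\al\in\Z$.

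Parts $(3)$ and $(4)$ are then immediate from the submodule lattice just computed. For $\al\in\Z,\be=0$ one has $d_m v_\al=0$ for all $m$, while every $v_k$ with $k\neq\al$ generates the whole module; thus $\C v_\al$ is the unique proper nonzero submodule and $A(\al,0)/\C v_\al$ is the asserted unique nontrivial irreducible quotient. Dually, for $\al\in\Z,\be=-1$ the index $\al$ is the only one not reachable from the others, so $\bigoplus_{k\neq\al}\C v_k$ is the unique nontrivial irreducible submodule.

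For part $(5)$ I would exhibit an explicit diagonal, weight-preserving map $\phi(v_k)=c_k v_k$. Imposing $\phi\circ d_m=d_m\circ\phi$ and using \eqref{A(a,b)} telescopes the unknowns to the relation $(\al-j)c_j=(\al-k)c_k$ when $\al\notin\Z$, solved by $c_k=(\al-k)^{-1}$, and to $j\,c_j=k\,c_k$ on the surviving indices when $\al=0$, solved by $c_k=k^{-1}$. In both cases the scalars $c_k$ are finite and nonzero exactly on the index set carrying the irreducible subquotient, so one must take some care to restrict to that set; this yields $A'(\al,0)\cong A'(\al,-1)$ for every $\al\in\C$.
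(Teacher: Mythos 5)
Your proof is correct, but it cannot be compared with the paper's own argument for the simple reason that there is none: the paper states Lemma \ref{main.1} as a list of well-known facts about the Virasoro modules of intermediate series (classical results in the spirit of \cite{FF}, also used in \cite{CGZ}) and supplies no proof. So your route is genuinely different in kind --- a self-contained, elementary verification instead of an appeal to the literature --- and it holds up. The structural observation is the right one: since the $d_0$-eigenvalues $\al-k$ are pairwise distinct, every submodule has the form $\oplus_{k\in S}\C v_k$, and your bookkeeping is accurate: for $\be\neq 0$ the coefficient $\al+\be(j-k)-k$ kills at most one target $j_0(k)=k+(k-\al)/\be$, a proper nonzero submodule must omit exactly the single index $j_0(k)$ for \emph{every} $k\in S$, and constancy of this affine function of $k$ on the infinite set $S$ forces $\be=-1$ and pins the omitted index to $\al\in\Z$; for $\be=0$ the coefficient $\al-k$ depends only on the source, leaving $\C v_\al$ as the only candidate. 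Parts $(3)$ and $(4)$ then read off from this submodule lattice exactly as you say. The one place to be careful is part $(5)$: the relation $(\al-j)c_j=(\al-k)c_k$ that you write down is the intertwining condition for a map \emph{from} $A(\al,-1)$ \emph{to} $A(\al,0)$, and it is indeed solved by $c_k=(\al-k)^{-1}$ when $\al\notin\Z$; had you oriented the map the other way, the condition becomes $(\al-k)c_j=(\al-j)c_k$, solved by $c_k=\al-k$. Either orientation gives the isomorphism, and your remark that the singular (or vanishing) coefficient occurs precisely at the discarded index $k=\al$ when $\al\in\Z$ is exactly what justifies restricting the diagonal map to the irreducible subquotient, giving $A'(\al,0)\cong A'(\al,-1)$ in all cases. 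In short: the paper's citation buys brevity; your argument buys a complete and verifiable proof, at the cost of the case analysis you correctly identified as the main bookkeeping burden.
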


We have the classification for all  irreducible modules of intermediate series over $\DD$.

\begin{lemma}(cf. \cite[ Theorem 3.9]{LPXZ})\label{main.2}
If $V$ is an irreducible module of intermediate series over $\DD$,
then $V\cong A(\al,\be,\ga)$ or $V\cong A'(\al,\be)^{\DD}$
for some $\al,\be\in\C$ and $\ga\in\C^*$.
\end{lemma}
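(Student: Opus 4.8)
The plan is to analyze $V$ first as a module over the Virasoro subalgebra $\VV$ and then to use the Heisenberg operators $h_r$ to glue the $\VV$-pieces together. By Schur's Lemma the central elements $\c$ and $\bfl$ act on $V$ as scalars $c$ and $l$. Fix a weight $\lambda_0$ with $V_{\lambda_0}\neq 0$; since $d_m$ shifts weights by $-m\in\Z$ and $h_r$ shifts them by $-r\in\frac12+\Z$, the support of $V$ lies in $\lambda_0+\frac12\Z$. As a $\VV$-module, $V$ therefore splits as $V=V^{[0]}\oplus V^{[1]}$, where $V^{[0]}=\bigoplus_{k\in\Z}V_{\lambda_0+k}$ and $V^{[1]}=\bigoplus_{k\in\Z}V_{\lambda_0+\frac12+k}$, each a weight $\VV$-module all of whose weight spaces are at most one-dimensional, and the operators $h_r$ interchange the two summands.

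First I would dispose of the case in which every $h_r$ acts as zero. Then $\HH V=0$, so $V$ is already irreducible as a $\VV$-module (extended to $\DD$ by $\HH V=0$) and is of intermediate series. By the well-known classification of intermediate series modules over $\VV$ (whose relevant features are recorded in Lemma \ref{main.1}), on which moreover $\c$ acts as $0$, such a module is isomorphic to the unique nontrivial irreducible subquotient $A'(\al,\be)$ of some $A(\al,\be)$. Hence $V\cong A'(\al,\be)^{\DD}$, with $c=l=0$.

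Next I would treat the case in which some $h_r$ acts nontrivially; by irreducibility this forces both $V^{[0]}$ and $V^{[1]}$ to be nonzero. Writing the actions on a weight basis as $d_mv_\lambda=f_m(\lambda)v_{\lambda-m}$ and $h_rv_\lambda=g_r(\lambda)v_{\lambda-r}$, the Virasoro classification fixes $f_m$ in the intermediate-series form $f_m(\lambda)=-\lambda+\be_i m+c_i$ on each coset $V^{[i]}$. I would then extract functional equations from the relations $[d_m,h_r]=-rh_{m+r}$ and $[h_r,h_s]=r\de_{r+s,0}\bfl$: the first gives
\begin{equation*}
g_r(\lambda)f_m(\lambda-r)-f_m(\lambda)g_r(\lambda-m)=-r\,g_{m+r}(\lambda),
\end{equation*}
while the second gives $g_s(\lambda)g_r(\lambda-s)=g_r(\lambda)g_s(\lambda-r)$ for $r+s\neq0$ together with a companion identity when $s=-r$. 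Solving the first family (using that $f_m$ is already pinned down and that the two cosets must share a common slope $\be_0=\be_1=:\be$) should force $g_r(\lambda)$ to be independent of both $r$ and $\lambda$, taking one constant value on $V^{[0]}$ and another on $V^{[1]}$. Rescaling the basis then puts the action into exactly the form of $A(\al,\be,\ga)$ with $\ga\in\C^*$ (nonzero by irreducibility), and substituting these coset-constant $g_r$ into the $s=-r$ identity produces a cancellation that forces $l=0$; the Virasoro pieces again give $c=0$.

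The main obstacle I anticipate is the detailed solution of the coupled functional equations in the nontrivial case, in particular ruling out $\lambda$-dependence of $g_r$ and matching the two Virasoro parameters, while simultaneously handling the degenerate (reducible) Virasoro pieces $A'(\al,0)\cong A'(\al,-1)$ where a single weight space drops out and the linear formula for $f_m$ must be read with care at the missing weight. Confirming that such gaps are incompatible with a nonzero $h_r$-action linking the two cosets — so that the support is in fact the entire coset $\lambda_0+\frac12\Z$ — is the delicate point; everything else reduces to bookkeeping with the two displayed identities.
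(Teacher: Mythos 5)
The first thing to note is that the paper contains no proof of this statement at all: Lemma \ref{main.2} is imported wholesale from \cite[Theorem 3.9]{LPXZ} (a preprint), so there is no internal argument to compare yours against; your proposal has to be judged as a standalone proof of that cited classification theorem. Your architecture is the natural one and matches what such proofs do: split $V$ over $\VV$ into the two cosets $V^{[0]}\oplus V^{[1]}$ of $\lambda_0+\frac12\Z$, dispose of the case $\HH V=0$ via the Virasoro intermediate-series classification, and otherwise extract functional equations for the coefficients $g_r(\lambda)$ of the $h_r$-action. Both displayed identities are correctly derived from $[d_m,h_r]=-rh_{m+r}$ and $[h_r,h_s]=r\de_{r+s,0}{\bf l}$, and the endpoint data you predict (common slope $\be$ on the two cosets, coset-constant $g_r$, $l=0$, $c=0$) are exactly what holds in $A(\al,\be,\ga)$; in particular the cancellation you describe for $s=-r$ does force $l=0$ \emph{once} constancy is known.

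The genuine gap is that the proposal stops precisely where the theorem lives. First, the assertion that the equations ``should force'' $g_r(\lambda)$ to be independent of $r$ and $\lambda$ is the entire content of the classification in the nontrivial case, and it is anticipated rather than derived; note also that literal constancy cannot be forced (it only holds modulo the residual per-coset rescalings that preserve the normalized $\VV$-action), so the statement itself needs to be phrased and proved up to that freedom. Second, invoking ``the Virasoro classification'' to write $f_m(\lambda)=-\lambda+\be_i m+c_i$ on each coset is not available as stated: $V^{[i]}$ is a weight $\VV$-module with multiplicities at most one, but it is not a priori irreducible or even indecomposable, so one needs the Kaplansky--Santharoubane classification of \emph{indecomposable} multiplicity-one modules, which includes deformed families where the linear formula fails at a single weight, together with an argument excluding those and excluding gaps in the support when some $h_r\neq 0$. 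You flag exactly these points yourself as ``the delicate point,'' which is honest, but flagging is not resolving: these two items are the actual work of \cite[Theorem 3.9]{LPXZ}. (A small symptom that the details were not checked: your ansatz gives $f_0(\lambda)=-\lambda+c_i$, whereas $f_0(\lambda)=\lambda$ is forced by $d_0v_\lambda=\lambda v_\lambda$.) So the plan is sound and aims at the right answer, but as written it is an outline with the decisive steps missing, not a proof.
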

\subsection{Verma modules}
\begin{definition}
A  weight $\DD$-module $V$ is called a
{\bf highest weight module} with {highest weight} $\l\in \C$,
if there exists a nonzero weight vector $v
		\in V_{\l}$ such that $V$ is generated by $v$ as  $\DD$-module and $\DD^+ v=0 $.
\end{definition}

\begin{definition}
For $(c,h,l)\in\C^3$, let $\C{\bf1}$ be a $(\DD^0\oplus\DD^+)$-module with
		$$
		d_0{\bf1}=h{\bf1},\quad {\bf{c}{\bf1}}=c{\bf1}, \quad \mathbf {l}\cdot{{\bf1}}=l{\bf1},\quad \DD^{+}{\bf1}=0.
		$$
		The {\bf Verma module} over ${\DD}$ is defined by
		$$
		M(c,h,l)=U(\DD)\otimes_{U(\DD^{0}\oplus\DD^{+})}\C{\bf1}.
		$$
\end{definition}
Let $J(c,h,l)$ be the unique maximal submodule of $M(c,h,l)$.
Then we have the irreducible highest weight module $L(c,h,l)=M(c,h,l)/J(c,h,l)$.
We will use the ${\bf1}$ and ${\overline{\bf1}}$ to denote the highest weight vectors
of $M(c,h,l)$ and $L(c,h,l)$ respectively.
Similarly, we have $M_{\VV}(c,h),L_{\VV}(c,h)=M_{\VV}(c,h)/J_{\VV}(c,h)$ for the Virasoro algebra $\VV$,
and $M_{\HH}(l),L_{\HH}(l)=M_{\HH}(l)/J_{\HH}(l)$ for the twisted Heisenberg algebra $\HH$.
It is clear that they are all restricted modules.
Moreover, we have the following results about highest weight modules over $\DD$.
\begin{lemma}(cf. \cite[Proposition 5.1]{LPXZ})\label{main.3}
Let $c,h\in\C$ and $l\in\C^*$. Then we have $\DD$-module isomorphisms
\begin{eqnarray*}
&&\pi_1:M(c,h,l)\rightarrow M_{{\VV}}(c-1,h-\frac{1}{16})^{\DD}\otimes M_{\HH}(l)^{\DD},\\
&&\pi_2:L(c,h,l)\rightarrow L_{{\VV}}(c-1,h-\frac{1}{16})^{\DD}\otimes M_{\HH}(l)^{\DD}.
\end{eqnarray*}
\end{lemma}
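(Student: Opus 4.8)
The plan is to realize the right-hand side as a highest weight $\DD$-module generated by one vector of weight $(c,h,l)$, identify it with $M(c,h,l)$ via the universal property, and then separate off the two quotients. Write ${\bf 1}_{\VV}$ and ${\bf 1}_{\HH}$ for the highest weight vectors of $M_{\VV}(c-1,h-\frac{1}{16})$ and $M_{\HH}(l)$, and set $v_0:={\bf 1}_{\VV}\ot{\bf 1}_{\HH}$. Using that $\HH$ acts by zero on $M_{\VV}(\cdot)^{\DD}$ while the Virasoro modes act on $M_{\HH}(l)^{\DD}$ through the Sugawara map \eqref{Vertex}, I would first compute the action of $\DD^0\oplus\DD^+$ on $v_0$. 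Since $[h_{n-k},h_k]=0$ for $n\ne0$, one gets $d_n{\bf 1}_{\HH}=0$ for $n>0$ and $d_0{\bf 1}_{\HH}=\frac{1}{16}{\bf 1}_{\HH}$; combining with $d_n{\bf 1}_{\VV}=0$ $(n>0)$, $d_0{\bf 1}_{\VV}=(h-\frac{1}{16}){\bf 1}_{\VV}$, and the central scalars $(c-1,0)$ and $(1,l)$ on the two factors, a direct check gives $\DD^+v_0=0$, $d_0v_0=hv_0$, ${\bf c}v_0=cv_0$, ${\bf l}v_0=lv_0$. Thus $v_0$ is a highest weight vector of weight $(c,h,l)$, and the universal property of $M(c,h,l)$ yields a $\DD$-homomorphism $\pi_1\colon M(c,h,l)\to M_{\VV}(c-1,h-\frac{1}{16})^{\DD}\ot M_{\HH}(l)^{\DD}$ with $\pi_1({\bf 1})=v_0$.

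Next I would prove $\pi_1$ is an isomorphism in two moves. For surjectivity, let $W$ be the $\DD$-submodule generated by $v_0$. Applying $U(\HH^-)$ and using $\HH\,{\bf 1}_{\VV}=0$ gives ${\bf 1}_{\VV}\ot M_{\HH}(l)\subseteq W$. Then induction on the Virasoro degree of $u\in U(\VV^-)$, through the identity $d_{-n}(u{\bf 1}_{\VV}\ot v)=(d_{-n}u){\bf 1}_{\VV}\ot v+u{\bf 1}_{\VV}\ot(d_{-n}v)$, shows $u{\bf 1}_{\VV}\ot M_{\HH}(l)\subseteq W$: the point is that the Sugawara cross-term $u{\bf 1}_{\VV}\ot(d_{-n}v)$ already lies in $W$ by the inductive hypothesis, since $d_{-n}v\in M_{\HH}(l)$ and the Virasoro degree of $u$ has dropped. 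Hence $W$ is everything. To upgrade surjectivity to bijectivity I would compare graded dimensions: $\pi_1$ preserves $d_0$-weights, all weight spaces are finite dimensional, and a PBW computation gives
\[
\mathrm{ch}\,M(c,h,l)=q^{h}\prod_{n\ge1}(1-q^{n})^{-1}\prod_{j\ge0}(1-q^{j+\frac12})^{-1},
\]
which factors exactly as the product of $q^{h-\frac{1}{16}}\prod_{n\ge1}(1-q^{n})^{-1}$ and $q^{\frac{1}{16}}\prod_{j\ge0}(1-q^{j+\frac12})^{-1}$, the characters of the two tensor factors. Equal finite dimensions together with surjectivity force $\pi_1$ to be bijective on each weight space.

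For $\pi_2$ it suffices to show that under $\pi_1$ the maximal submodule $J(c,h,l)$ corresponds to $J_{\VV}(c-1,h-\frac{1}{16})\ot M_{\HH}(l)$, equivalently that $N:=L_{\VV}(c-1,h-\frac{1}{16})^{\DD}\ot M_{\HH}(l)^{\DD}$ is irreducible, for then it is the unique irreducible quotient of $M(c,h,l)$, namely $L(c,h,l)$. To prove $N$ irreducible, take $0\ne S\subseteq N$. Using the $\HH$-grading together with the fact that for $l\ne0$ the only vectors of $M_{\HH}(l)$ killed by all $h_r$ $(r>0)$ are the multiples of ${\bf 1}_{\HH}$, I would apply suitable raising operators to a nonzero element of $S$ to produce a nonzero $w\ot{\bf 1}_{\HH}\in S$ with $w\in L_{\VV}$. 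Setting $P=\{w'\in L_{\VV}:w'\ot{\bf 1}_{\HH}\in S\}$, applying $U(\HH^-)$ and $\HH\,{\bf 1}_{\VV}=0$ shows $w'\ot M_{\HH}(l)\subseteq S$ for every $w'\in P$; this absorbs the Sugawara cross-term, so the identity $d_{-n}(w'\ot{\bf 1}_{\HH})=(d_{-n}w')\ot{\bf 1}_{\HH}+w'\ot(d_{-n}{\bf 1}_{\HH})$ places $(d_{-n}w')\ot{\bf 1}_{\HH}$ in $S$. Hence $P$ is stable under all $d_n$, so it is a nonzero $\VV$-submodule of the irreducible $L_{\VV}$, giving $P=L_{\VV}$ and then $S\supseteq L_{\VV}\ot M_{\HH}(l)=N$.

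The main obstacle, recurring in both parts, is the cross-terms produced by the Sugawara action of the negative Virasoro modes on the Heisenberg vacuum, since $d_{-n}{\bf 1}_{\HH}\ne0$ means neither argument can be run factor-by-factor. The common device that resolves this is the observation that from any $w'\ot{\bf 1}_{\HH}$ the free Heisenberg action recovers the whole slice $w'\ot M_{\HH}(l)$, which swallows these cross-terms; the one external input I would need to pin down is the Fock-module fact that for $l\ne0$ exactly $\C{\bf 1}_{\HH}$ is annihilated by $\HH^+$.
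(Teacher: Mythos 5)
Your proposal is correct, but it cannot coincide with ``the paper's own proof'' for a simple reason: the paper never proves Lemma \ref{main.3} at all --- it is imported wholesale from \cite[Proposition 5.1]{LPXZ}. So what you have written is a genuinely different route, namely a self-contained reconstruction. All three of your steps are sound: the Sugawara-type action (\ref{Vertex}) does give $d_n{\bf 1}_{\HH}=0$ for $n>0$ and $d_0{\bf 1}_{\HH}=\frac{1}{16}{\bf 1}_{\HH}$, so ${\bf 1}_{\VV}\ot{\bf 1}_{\HH}$ is a highest weight vector of weight $(c,h,l)$ and the universal property applies; the surjectivity induction plus the character identity (both factors have finite-dimensional weight spaces, and the PBW characters visibly match) upgrades the map to an isomorphism $\pi_1$; and your submodule argument for $L_{\VV}(c-1,h-\frac{1}{16})^{\DD}\ot M_{\HH}(l)^{\DD}$ is valid --- the one step stated loosely, producing a nonzero $w\ot{\bf 1}_{\HH}$ in a given submodule, does go through by taking the component of top $\HH$-degree and using the fact you flag (for $l\ne0$ only $\C{\bf 1}_{\HH}$ is killed by all $h_r$, $r>0$, which is equivalent to the irreducibility of $M_{\HH}(l)$ that the paper itself invokes). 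Your key device --- recovering the whole slice $w'\ot M_{\HH}(l)$ from $w'\ot{\bf 1}_{\HH}$ via the free Heisenberg action, so that the Sugawara cross-terms are absorbed --- is exactly what makes both inductions close up. Two remarks on economy: your final irreducibility step is precisely an instance of the paper's Lemma \ref{main.7}(1), whose proof (via Lemma \ref{main.6}, quoted from \cite{LZ}) does not rely on Lemma \ref{main.3}, so you could have cited it without circularity rather than reproving it by hand; conversely, your hands-on argument amounts to verifying the hypotheses of Lemma \ref{main.6} directly in this special case. What your approach buys is independence from the unpublished reference \cite{LPXZ}; what the paper's citation (or an appeal to Lemma \ref{main.7}) buys is brevity.
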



\begin{lemma}(cf. \cite[Corollary 5.2]{LPXZ})\label{main.4}
Let $c,h\in\C$ and $l\in\C^*$.
Then the Verma module $M(c,h,l)$  is irreducible if and only if
$\phi_{r,s}(c-1,h-\frac{1}{16})\neq 0$ for all $r,s\in\Z_+$, where
\begin{eqnarray*}
			\phi_{r,s}(c,h)&=&h-\frac{1}{48}(13-c)(r^2+s^2)\\
			&&-\frac{1}{48}\left(\sqrt{(c-1)(c-25)}(r^2-s^2)-24rs-2+2c\right).
		\end{eqnarray*}
\end{lemma}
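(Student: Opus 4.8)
The plan is to leverage the tensor-product factorization provided by Lemma \ref{main.3}, which reduces the irreducibility question for $M(c,h,l)$ entirely to the classical Kac determinant criterion for Virasoro Verma modules, once the twisted Heisenberg factor has been shown to play no role. First I would record the key fact about the Heisenberg side: since $l\in\C^{*}$, the twisted Heisenberg Verma module $M_{\HH}(l)$ is irreducible, so $M_{\HH}(l)=L_{\HH}(l)$. This is the standard statement that a Fock representation of a Heisenberg algebra at nonzero level has no proper submodule; concretely, the contravariant form on $M_{\HH}(l)$ is nondegenerate because the relevant Gram determinants are nonzero powers of $l$, the commutators $[h_r,h_{-r}]=r\mathbf l$ having nonzero eigenvalue $rl$.

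Next I would feed this into the two isomorphisms $\pi_1$ and $\pi_2$ of Lemma \ref{main.3}. Since $M(c,h,l)$ is irreducible precisely when $M(c,h,l)=L(c,h,l)$, and $\pi_1,\pi_2$ identify these modules with $M_{\VV}(c-1,h-\frac{1}{16})^{\DD}\otimes M_{\HH}(l)^{\DD}$ and $L_{\VV}(c-1,h-\frac{1}{16})^{\DD}\otimes M_{\HH}(l)^{\DD}$ respectively, the task becomes showing that the tensor product $W^{\DD}\otimes M_{\HH}(l)^{\DD}$ is irreducible if and only if the Virasoro module $W=M_{\VV}(c-1,h-\frac{1}{16})$ is irreducible. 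I would establish the nontrivial implication by a weight-space dimension (character) count: each weight-space dimension of a tensor product is the convolution of the weight-space dimensions of the two factors, and because $M_{\HH}(l)$ appears as a common fixed factor with finite, positive weight multiplicities, equality of all weight-space dimensions of the two tensor products forces $\dim M_{\VV}(c-1,h-\frac{1}{16})_{\mu}=\dim L_{\VV}(c-1,h-\frac{1}{16})_{\mu}$ for every weight $\mu$ (arguing by induction on the grading, starting from the top weight, where they agree). Hence $M(c,h,l)$ is irreducible if and only if $M_{\VV}(c-1,h-\frac{1}{16})=L_{\VV}(c-1,h-\frac{1}{16})$, i.e. if and only if $M_{\VV}(c-1,h-\frac{1}{16})$ is irreducible. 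Alternatively, one may avoid characters and argue directly that every $\DD$-submodule of $W^{\DD}\otimes M_{\HH}(l)^{\DD}$ has the form $W'\otimes M_{\HH}(l)$ for a $\VV$-submodule $W'\subseteq W$, by applying the Heisenberg operators $1\otimes h_r$ to a nonzero element and using $\HH$-irreducibility of $M_{\HH}(l)$ to saturate the second tensor factor, then projecting onto $W$; this is essentially the content of the tensor-product lemmas collected in Section \ref{pre}.

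Finally I would invoke the classical Kac determinant criterion for the Virasoro algebra: the Verma module $M_{\VV}(c',h')$ is irreducible if and only if its contravariant form is nondegenerate at every level, which fails exactly at the zeros of the Kac polynomial, i.e. when $\phi_{r,s}(c',h')=0$ for some admissible $r,s$, with $\phi_{r,s}$ as given (so that $\phi_{r,s}(c',h')=h'-h_{r,s}(c')$). Substituting $(c',h')=(c-1,h-\frac{1}{16})$ then yields precisely the stated condition: $M(c,h,l)$ is irreducible if and only if $\phi_{r,s}(c-1,h-\frac{1}{16})\neq0$ for all $r,s\in\Z_+$.

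The main obstacle is the middle step, namely decoupling the Heisenberg factor and showing rigorously that reducibility of the $\DD$-module $W^{\DD}\otimes M_{\HH}(l)^{\DD}$ is governed solely by the Virasoro factor $W$. The delicate point is that the Virasoro action on $M_{\HH}(l)^{\DD}$ is the nontrivial Sugawara-type action \eqref{Vertex} built out of the $h_r$, so the two tensor factors are genuinely coupled under $d_m$; the argument must exploit the $\HH$-irreducibility of $M_{\HH}(l)$, which holds only because $l\neq0$, in order to untangle them. Everything else is either the cited factorization of Lemma \ref{main.3} or the textbook Virasoro Kac determinant.
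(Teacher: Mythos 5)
The paper never proves this lemma: it is imported verbatim, with the citation \cite[Corollary 5.2]{LPXZ}, so there is no internal proof to compare yours against. Judged on its own, your derivation is correct, and it is surely the argument behind the cited result: the Fock module $M_{\HH}(l)$ is irreducible because $l\ne 0$; the isomorphisms $\pi_1,\pi_2$ of Lemma~\ref{main.3} reduce the question ``is $M(c,h,l)=L(c,h,l)$?'' to ``is $M_{\VV}(c-1,h-\frac{1}{16})=L_{\VV}(c-1,h-\frac{1}{16})$?''; and the Feigin--Fuchs/Kac determinant criterion converts the latter into the nonvanishing of $\phi_{r,s}(c-1,h-\frac{1}{16})$.

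Two remarks. First, the step you single out as the main obstacle --- decoupling the Heisenberg factor despite the Sugawara-type action (\ref{Vertex}) --- is already available inside the paper: Lemma~\ref{main.7}(1), proved from Lemma~\ref{main.6}, states that every $\DD$-submodule of $V^{\DD}\otimes H^{\DD}$ has the form $(V')^{\DD}\otimes H^{\DD}$ whenever $H\in\RR_{\HH}$ is irreducible with nonzero action of $\mathbf{l}$; the coupling under $d_m$ is harmless there because the argument only uses $\HH\subseteq Ann_{\DD}(V^{\DD})$ and the $\HH$-irreducibility of $H^{\DD}$. So within the paper's own toolkit your middle step is a one-line citation; alternatively, your convolution-of-characters argument is also valid, and note it is needed only for the direction ``$M(c,h,l)$ irreducible $\Rightarrow$ $M_{\VV}(c-1,h-\frac{1}{16})$ irreducible,'' since the converse is immediate: if the Virasoro Verma module equals its irreducible quotient, then $\pi_1$ and $\pi_2$ identify $M(c,h,l)$ with $L(c,h,l)$, which is irreducible by construction. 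Second, a caveat concerning the statement rather than your proof: with this paper's convention $\Z_+=\{0,1,2,\dots\}$, allowing $r=s=0$ gives $\phi_{0,0}(c',h')=h'-\frac{c'-1}{24}$, which vanishes on many irreducible Virasoro Verma modules, so the criterion as literally indexed is too strong; the Kac criterion you invoke, and hence the lemma, must be read with $r,s\in\N$, i.e.\ $r,s\ge 1$. Your phrase ``admissible $r,s$'' silently does this, but it deserves to be said explicitly.
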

We conclude this section by recalling two familiar results for later use.
\begin{lemma}(cf. \cite[Lemma 6]{LZ})\label{main.5}
Let $\GG$ be a Lie algebra over $\C$ with a countable basis, and   $V$ be an  irreducible $\GG$-module.
For any $n\in \N$ and any linearly independent subset
$\{v_1,v_2,\cdots v_n\}\subset V$, and any subset
$\{v_1',\ldots,v_n'\}\subset V$, there exists some $u\in \UU{(\GG)}$, such
that
 $$uv_i=v_i',\forall i=1,2,\cdots, n.$$
 \end{lemma}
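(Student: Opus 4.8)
The statement is the Jacobson density theorem specialized to the ring $R=\UU(\GG)$, so the plan is to reduce it to that theorem after first computing the relevant endomorphism ring. First I would regard $V$ as an irreducible module over the associative algebra $R=\UU(\GG)$ and set $D=\End_{R}(V)$, which is a division algebra over $\C$ by Schur's lemma. The Jacobson density theorem asserts that $R$ acts densely on $V$ viewed as a vector space over $D$; concretely, for any $D$-linearly independent $v_1,\dots,v_n$ and arbitrary $v_1',\dots,v_n'$ there is $u\in R$ with $uv_i=v_i'$ for all $i$. Thus the lemma will follow once I show that $D=\C$, for then $D$-linear independence coincides with the $\C$-linear independence hypothesized in the statement.

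The crux is therefore proving $D=\C$, which is the Dixmier form of Schur's lemma, and it is exactly here that the two standing hypotheses---that $\GG$ has a countable basis and that the ground field is $\C$---get used. I would argue as follows. Since $\GG$ has a countable basis, $R=\UU(\GG)$ is countable-dimensional over $\C$. As $V$ is irreducible it is cyclic, hence a quotient of $R$, so $\dim_\C V$ is at most countable; fixing $0\neq v\in V$, the evaluation map $D\to V$, $f\mapsto f(v)$, is injective, because a nonzero $R$-endomorphism of an irreducible module has zero kernel and full image, hence does not kill $v$. Consequently $\dim_\C D$ is at most countable. Now take any $d\in D$. If $d$ were transcendental over $\C$, then $\C(d)\subseteq D$ would be a field isomorphic to the rational function field $\C(t)$; but the family $\{(d-a)^{-1}\mid a\in\C\}$ is $\C$-linearly independent and $\C$ is uncountable, contradicting the countability of $\dim_\C D$. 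Hence every $d\in D$ is algebraic over $\C$, and since $\C$ is algebraically closed this forces $d\in\C$, i.e. $D=\C$.

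With $D=\C$ in hand I would simply invoke density: given $\C$-linearly independent $v_1,\dots,v_n$ and any targets $v_1',\dots,v_n'$, the theorem produces $u\in\UU(\GG)$ with $uv_i=v_i'$ for every $i$. If one prefers not to cite density as a black box, the same conclusion follows by induction on $n$, whose inductive heart is the sublemma that for linearly independent $v_1,\dots,v_n$ and each index $i$ the left ideal $I_i=\{a\in\UU(\GG)\mid av_j=0 \text{ for } j\neq i\}$ satisfies $I_iv_i\neq0$, so that $I_iv_i=V$ by irreducibility; choosing $u_i\in I_i$ with $u_iv_i=v_i'$ and setting $u=\sum_i u_i$ then gives $uv_j=v_j'$ for all $j$. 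The main obstacle is precisely proving this sublemma: in the inductive step one builds a natural map $I_iv_{n-1}\to I_iv_n$ whose well-definedness amounts to identifying a certain $R$-endomorphism of $V$ with a scalar, which is once more the identity $D=\C$ established above.
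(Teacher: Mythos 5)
Your proof is correct and takes essentially the same route as the source this paper defers to: the paper itself gives no proof of this lemma, simply citing \cite[Lemma 6]{LZ}, and the argument there is exactly the one you give — reduce to the Jacobson density theorem after establishing $\End_{\UU(\GG)}(V)=\C$ via Dixmier's form of Schur's lemma, using that $\dim_\C\UU(\GG)$ (hence $\dim_\C V$ and $\dim_\C\End_{\UU(\GG)}(V)$) is countable while the family $\{(d-a)^{-1}\mid a\in\C\}$ attached to a transcendental $d$ would be uncountable and linearly independent. All steps in your write-up (countable dimension from PBW, injectivity of the evaluation map $f\mapsto f(v)$, algebraicity forcing $d\in\C$, and the density conclusion) are sound, so nothing needs to be added.
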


\begin{lemma}(cf. \cite[Theorem 7]{LZ})\label{main.6}
Let $\GG$ be a Lie algebra over $\C$ with a  countable basis, and  $V_1, V_2$ be $\GG$-modules.
Suppose that one of the following conditions holds:

\begin{enumerate}[$(1)$]
\item The module $V_1$ is irreducible and $Ann_{\GG}(v)+Ann_{\GG}(S)=\GG$ for all $v\in V_1$
and all finite subsets $S\subseteq V_2$;
\item
 For any finite subset
$S\subseteq V_2$, $V_1$ is an irreducible
$Ann_{\GG}(S)$-module.\end{enumerate} Then the following statements hold.
\begin{enumerate}[$($a$)$]
 \item  Any
submodule of $V_1\otimes V_2$ is of the form $V_1\otimes V_2'$ for
a submodule $V_2'$  of $V_2$; \item If $V_1, V_2$ are irreducible, then
$V_1\otimes V_2$ is irreducible.
\end{enumerate}
\end{lemma}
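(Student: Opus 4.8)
The plan is to reduce both hypotheses to condition (2) and then prove part (a) by a tensor-rank/density argument, from which part (b) is immediate. First I would record the elementary but essential fact that for any subset $S$ of a module the annihilator $Ann_{\GG}(S)=\{x\in\GG:xs=0\ \forall s\in S\}$ is a Lie subalgebra: if $x,y$ kill every $s\in S$ then $[x,y]s=x(ys)-y(xs)=0$. This is what makes $\UU(Ann_{\GG}(S))$ and condition (2) meaningful. Next I would show $(1)\Rightarrow(2)$. Fix a finite $S\subseteq V_2$, put $A=Ann_{\GG}(S)$, and take $0\neq v\in V_1$; I must show $\UU(A)v=V_1$. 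Given any target $u\in V_1$, irreducibility of $V_1$ together with Lemma \ref{main.5} produces $g\in\UU(\GG)$ with $gv=u$. Using the splitting $\GG=Ann_{\GG}(v)+A$ from (1) and a PBW basis adapted to it, I can write $g=\sum_\alpha a_\alpha m_\alpha$ with $a_\alpha\in\UU(A)$ and $m_\alpha$ ordered monomials drawn from a complement lying inside $Ann_{\GG}(v)$; every nonempty $m_\alpha$ kills $v$, so $u=gv=a_\emptyset v$ with $a_\emptyset\in\UU(A)$. Hence $\UU(A)v=V_1$, i.e. $V_1$ is $A$-irreducible, so condition (2) holds.

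Working now under (2), the key computation is that $\UU(A)$ acts only on the first tensor factor along $S$: if $x\in A=Ann_{\GG}(\{v_1,\dots,v_k\})$ then $x(u\otimes v_i)=(xu)\otimes v_i$, and by an easy induction $a(u\otimes v_i)=(au)\otimes v_i$ for all $a\in\UU(A)$. To prove (a), let $0\neq W\subseteq V_1\otimes V_2$ be a submodule and pick any $0\neq w\in W$, written with the minimal number $k$ of simple tensors as $w=\sum_{i=1}^k u_i\otimes v_i$. Minimality forces both $\{u_i\}$ and $\{v_i\}$ to be linearly independent, since a dependence among either set would let one shorten $w$. Setting $A=Ann_{\GG}(\{v_1,\dots,v_k\})$ and using that $V_1$ is $A$-irreducible with the $u_i$ independent, Lemma \ref{main.5} yields, for any prescribed $z\in V_1$, an element $a\in\UU(A)$ with $au_1=z$ and $au_i=0$ for $i\geq2$; then $aw=z\otimes v_1\in W$. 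As $z$ is arbitrary this gives $V_1\otimes v_1\subseteq W$, and likewise $V_1\otimes v_i\subseteq W$ for every $i$.

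I would then define $V_2'=\{v\in V_2:V_1\otimes v\subseteq W\}$. It is clearly a subspace, and it is a $\GG$-submodule because for $v\in V_2'$, $u\in V_1$, $x\in\GG$ one has $u\otimes(xv)=x(u\otimes v)-(xu)\otimes v\in W$, so $xv\in V_2'$. By construction $V_1\otimes V_2'\subseteq W$; conversely the previous paragraph shows that for an arbitrary $w\in W$ every $v_i$ in its minimal expansion lies in $V_2'$, so $w\in V_1\otimes V_2'$. Thus $W=V_1\otimes V_2'$, proving (a). For (b), if $V_1,V_2$ are irreducible then (a) forces any submodule to be $W=V_1\otimes V_2'$ with $V_2'\in\{0,V_2\}$, whence $W\in\{0,V_1\otimes V_2\}$ and $V_1\otimes V_2$ is irreducible.

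I expect the main obstacle to be the extraction step. One must know that the $u_i$ can be steered independently, which is why genuine linear independence from minimal tensor rank is needed rather than mere nonvanishing, and one must know that the steering operator can be chosen inside $\UU(A)$ rather than $\UU(\GG)$, so that it leaves the second tensor factor untouched. Condition (2) supplies this directly through the density Lemma \ref{main.5}, and the PBW argument in the first paragraph is precisely the device that upgrades the weaker transversality hypothesis (1) to the $A$-irreducibility required by (2).
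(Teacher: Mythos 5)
Your proof is correct and complete: the PBW argument with a complement of $Ann_{\GG}(S)$ chosen inside $Ann_{\GG}(v)$ (placed to the right in the ordered monomials, so that nonempty monomials kill $v$) validly reduces hypothesis $(1)$ to $(2)$, and the minimal-rank expansion combined with the density Lemma \ref{main.6} applied to the subalgebra $Ann_{\GG}(\{v_1,\dots,v_k\})$ — which indeed still has a countable basis — correctly yields $V_1\otimes v_i\subseteq W$ for each $i$ and hence $W=V_1\otimes V_2'$. Note that the paper gives no proof of this lemma, quoting it from \cite{LZ}; your argument is essentially the standard one behind that cited result, so there is nothing to contrast.
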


\section{Irreducibility of tensor product weight modules}
From Section 2, we know that all irreducible modules of intermediate series over $\DD$
are $A'(\al,\be)^{\DD}$ or $A(\al,\be,\ga)$ where  $\al,\be\in\C,\ga\in\C^*$.
In this section, we will determine the necessary and sufficient conditions
for the tensor product modules $L(c,h,l)\otimes A'(\al,\be)^{\DD},L(c,h,l)\otimes A(\al,\be,\ga)$ to be irreducible
for any $c,h,l,\al,\be\in\C,\ga\in\C^*$.
\begin{lemma}\label{main.7}
Suppose that $V, W$ are  $\VV$-modules, and $H,K\in \RR_{\HH}$ are irreducible with nonzero action of $\bf{l}$.
Then
 \begin{enumerate}[$(1)$]
 \item  any  $\DD$-submodule of  $V^{\DD}\otimes H^{\DD}$ is of the
form $(V')^{\DD}\otimes  H^{\DD}$ for some $\VV$-submodule $V'$ of
$V$. In particular, $V^{\DD}\otimes H^{\DD}$ is an irreducible $\DD$-module
if and only if $V$ is an irreducible $\VV$-module;
 \item $V^{\DD}\otimes H^{\DD}\cong W^{\DD}\otimes K^{\DD}$ if and
only if $V\cong W$ and $H\cong K$.
 \end{enumerate}
\end{lemma}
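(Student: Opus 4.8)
The plan is to treat the two parts separately: part~(1) follows quickly from Lemma~\ref{main.6}, while the \emph{only if} direction of part~(2) requires a device to reconstruct $V$ intrinsically from the tensor product.

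For part~(1), I would apply Lemma~\ref{main.6} to the flipped tensor product $H^{\DD}\otimes V^{\DD}$ (isomorphic to $V^{\DD}\otimes H^{\DD}$ via $v\otimes w\mapsto w\otimes v$), taking $\GG=\DD$, which has a countable basis, together with $V_1=H^{\DD}$ and $V_2=V^{\DD}$, and verifying hypothesis~(2) of that lemma. The crucial observation is that $\HH$ acts as $0$ on $V^{\DD}$, so $\HH\subseteq\mathrm{Ann}_{\DD}(S)$ for every finite $S\subseteq V^{\DD}$; since $H^{\DD}$ restricts to the irreducible $\HH$-module $H$, it is a fortiori irreducible over the larger algebra $\mathrm{Ann}_{\DD}(S)$. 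Lemma~\ref{main.6}(a) then gives that every $\DD$-submodule of $H^{\DD}\otimes V^{\DD}$ has the form $H^{\DD}\otimes U$ for a $\DD$-submodule $U$ of $V^{\DD}$; and because $\HH$ kills $V^{\DD}$, the $\DD$-submodules of $V^{\DD}$ are exactly the $(V')^{\DD}$ for $\VV$-submodules $V'\subseteq V$. Flipping back yields the claim, and the irreducibility statement is the special case in which $V$ has no proper nonzero $\VV$-submodule.

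For part~(2) the \emph{if} direction is immediate from functoriality of $(-)^{\DD}$, so suppose $\phi\colon V^{\DD}\otimes H^{\DD}\to W^{\DD}\otimes K^{\DD}$ is a $\DD$-isomorphism. Restricting $\phi$ to $\HH$ and using that $\HH$ acts only on the second tensor factor, both sides become direct sums of copies of the irreducible $\HH$-modules $H$ and $K$ respectively; matching irreducible constituents of these semisimple $\HH$-modules forces $H\cong K$ (equivalently, $\mathbf{l}$ acts by the same scalar on both sides). It remains to extract $V\cong W$ as $\VV$-modules, and here the main obstacle appears: the negative Virasoro modes $d_{-n}$ do not preserve the space of $\HH$-highest-weight vectors, so the naive ``bottom'' $V\otimes\Omega_H$ recovers only the action of the nonnegative modes $d_0,d_1,d_2,\dots$ on $V$, not the full $\VV$-structure.

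I would resolve this by recovering $V$ through a Hom-space. Writing $M=V^{\DD}\otimes H^{\DD}$ and regarding $H^{\DD}$ as a $\DD$-module, the space $\Hom_{\C}(H^{\DD},M)$ is a $\DD$-module via $(x\cdot f)(w)=xf(w)-f(xw)$, and a direct check shows that the subspace $\Hom_{\HH}(H^{\DD},M)$ of $\HH$-equivariant maps is a $\DD$-submodule on which $\HH$ acts as $0$, hence a $\VV$-module. The assignment $v\mapsto f_v$ with $f_v(w)=v\otimes w$ is then $\VV$-equivariant, since the cross terms in $(d_n\cdot f_v)(w)=d_n(v\otimes w)-v\otimes d_nw$ cancel to leave $f_{d_nv}$ (and similarly for the central elements), and it is bijective by the $\HH$-isotypic decomposition $M\cong V\otimes H$ together with Schur's lemma $\End_{\HH}(H)=\C$, valid as $H$ has countable dimension over $\C$. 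Thus $V\cong\Hom_{\HH}(H^{\DD},M)$ canonically as $\VV$-modules, and likewise $W\cong\Hom_{\HH}(K^{\DD},W^{\DD}\otimes K^{\DD})$. Finally, $\phi$ together with a $\DD$-isomorphism $H^{\DD}\cong K^{\DD}$ (the $\HH$-isomorphism $H\cong K$ automatically intertwines the $\VV$-actions, since by (\ref{Vertex}) these are built from the $h_r$ and $l$) induces by conjugation a $\VV$-isomorphism of the two Hom-spaces, giving $V\cong W$ and completing the proof.
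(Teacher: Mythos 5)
Your proposal is correct. Part (1) is essentially the paper's own argument: both you and the paper verify hypothesis (2) of Lemma \ref{main.6} with $V_1=H^{\DD}$, $V_2=V^{\DD}$, the key point being $\HH\subseteq \mathrm{Ann}_{\DD}(V^{\DD})$, and then identify $\DD$-submodules of $V^{\DD}$ with $\VV$-submodules of $V$. For part (2), however, you take a genuinely different route. The paper argues by hand: fixing $x_0\in H$ and writing $\varphi(v\otimes x_0)=\sum_{i=1}^{t}w_i\otimes y_i$ with $t$ minimal, it invokes the density result (Lemma \ref{main.5}) to find $u_0\in\UU(\HH)$ with $u_0y_i=\delta_{1,i}y_i$; this exhibits $\varphi(v\otimes x)=w_v\otimes\varphi_v(x)$ for an $\HH$-isomorphism $\varphi_v\colon H\to K$, and after identifying $H=K$, Dixmier's Schur lemma makes $\varphi_v$ a scalar $c_v$, so $\varphi=\varpi\otimes\mathrm{id}_H$ with $\varpi(v)=c_vw_v$, whose $\VV$-equivariance is then read off directly. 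You instead obtain $H\cong K$ from the isotypic decomposition of the restriction to $\HH$ (no density lemma needed), and you recover $V$ functorially as the multiplicity space $\Hom_{\HH}(H^{\DD},V^{\DD}\otimes H^{\DD})$, a $\DD$-module with trivial $\HH$-action precisely because $\HH$ is an ideal of $\DD$; conjugating by $\varphi$ and by a $\DD$-isomorphism $H^{\DD}\cong K^{\DD}$ (automatic from (\ref{Vertex}), as you note --- a point the paper leaves implicit when it ``assumes $H=K$'') yields $V\cong W$. Both routes ultimately rest on Dixmier's version of Schur's lemma for the countable-dimensional module $H$. The paper's computation buys the stronger explicit conclusion that every isomorphism splits as $\varpi\otimes\mathrm{id}_H$; your Clifford-theory-style multiplicity-space argument is more portable, applying verbatim whenever an ideal kills one tensor factor and acts irreducibly (in countable dimension) on the other. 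One routine point to spell out in a final write-up: surjectivity of $v\mapsto f_v$ uses that $H$ is cyclic, so any $\HH$-map from $H$ into $\bigoplus_i v_i\otimes H$ lands in a finite sub-sum, after which Schur's lemma finishes the computation.
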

\begin{proof}(1) Note that $\HH\subseteq Ann_{\DD}({V^{\DD}})$, and $H^{\DD}$ is  an irreducible $\HH$-module.
Thus the result follows from Lemma \ref{main.6}.

 (2) It is clear for the sufficiency.

 Now we show the necessity. Suppose that $\varphi: V^{\DD}\otimes H^{\DD}\rightarrow W^{\DD}\otimes K^{\DD}$ is a $\DD$-module isomorphism.
 Let's fix a nonzero $x_0\in H$.
 For any $0\ne v\in\VV$,
 we write $\varphi(v\otimes x_0)=\sum_{i=1}^{t}w_i\otimes y_i$ with minimal $t$.
 We know that $y_i$'s are linearly independent.
 Then there exists some $u_0\in \UU{(\HH)}$ such that $u_0y_i=\delta_{1,i}y_i,i=1,2,\cdots,t$ by Lemma \ref{main.5}.
 Therefore,
 $$\varphi(v\otimes uu_0x_0)=\varphi(uu_0(v\otimes x_0))=w_1\otimes uy_1,\forall u\in \UU{(\HH)}.$$
 Denote $x_1=u_0x_0$. Then we have $\varphi(v\otimes ux_1)=w_1\otimes uy_1,\forall u\in \UU{(\HH)}.$
 Notice $\UU{(\HH)}x_1=H, \UU{(\HH)}y_1=K$, since $H,K$ are irreducible.
Define $\varphi_v:H\rightarrow K$ by $\varphi_v(ux_1)=uy_1,$ for all $u\in \UU{(\HH)}$.
 It is easy to see that $\varphi_v$ is a $\UU{(\HH)}$-module isomorphism.
That shows $H\cong K$ as $\HH$-modules.

At the moment, we can assume that $H=K$.
Then $\varphi$ is an isomorphism
from $V^{\DD}\otimes H^{\DD}$ to $W^{\DD}\otimes H^{\DD}$,
and $\varphi_v$ is an automorphism of $H$.
In fact, $\varphi_v$ is a scalar denoted by $c_v$,
since $H$ is irreducible and $\UU{(\HH)}$ is countably generated.
Then for any $v\in V,x\in H$,
$$\varphi(v\otimes x)=c_vw_v\otimes x,$$
for some $w_v\in H$.
Define the map $\varpi:V\rightarrow W$ by $v\rightarrow c_vw_v$.
It's clear that $\varpi$ is bijective.
Furthermore, we have that $d_n\varpi(v)\otimes x=\varpi(d_n v)\otimes x$,
since $d_n\varphi(v\otimes x)=\varphi (d_n(v\otimes x)),$ for all $v\in V,x\in H$.
Therefore, $d_n\varpi(v)=\varpi(d_n v)$,
that means $\varpi$ is a $\VV$-module isomorphism.
So, $V\cong W$ as $\VV$-modules.
We complete the proof.
   \end{proof}

\begin{proposition}\label{main.8}
Let $c,h,l,\al,\be\in\C,\ga\in\C^*$.
\begin{enumerate}[$(1)$]
  \item Suppose $l\ne0$. Then $L(c,h,l)\otimes A'(\al,\be)^{\DD}$ is an irreducible $\DD$-module
  if and only if $L_{\VV}(c-1,h-\frac{1}{16})\otimes A'(\al,\be)$ is an irreducible $\VV$-module.
  \item Suppose $l=0$. Then $L(c,h,l)\otimes A'(\al,\be)^{\DD}$ is an irreducible $\DD$-module
  if and only if $L_{\VV}(c,h)\otimes A'(\al,\be)$ is an irreducible $\VV$-module.
  \item Suppose $l=0$, then $L(c,h,l)\otimes A(\al,\be,\ga)$ is irreducible.
\end{enumerate}
\end{proposition}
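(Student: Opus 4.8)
The strategy is to strip off the twisted Heisenberg algebra $\HH$ in each case and reduce to a statement about $\VV$-modules, the tool being Lemma \ref{main.3} when $l\neq0$ and an explicit description of $L(c,h,0)$ when $l=0$. Two elementary observations will be used repeatedly: first, for any $\VV$-modules $P,Q$ there is a $\DD$-isomorphism $P^{\DD}\otimes Q^{\DD}\cong(P\otimes Q)^{\DD}$, since on both sides $\HH$ acts by $0$ and the Virasoro action is diagonal; second, a $\DD$-module of the form $W^{\DD}$ is irreducible if and only if $W$ is irreducible over $\VV$, because the vanishing of the $\HH$-action makes $\DD$-submodules and $\VV$-submodules coincide.

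For part (1), I would apply the isomorphism $\pi_2$ of Lemma \ref{main.3} to get $L(c,h,l)\cong L_{\VV}(c-1,h-\frac1{16})^{\DD}\otimes M_{\HH}(l)^{\DD}$, where $M_{\HH}(l)$ is irreducible because $l\neq0$ (the Fock module of the twisted Heisenberg algebra at nonzero level is irreducible). Since $A'(\al,\be)^{\DD}$ also carries the trivial $\HH$-action, the first observation lets me merge it with the Virasoro factor and rearrange the three tensorands to obtain
$$
L(c,h,l)\otimes A'(\al,\be)^{\DD}\cong\bigl(L_{\VV}(c-1,h-\tfrac1{16})\otimes A'(\al,\be)\bigr)^{\DD}\otimes M_{\HH}(l)^{\DD}.
$$
Now Lemma \ref{main.7}(1), applied with $V=L_{\VV}(c-1,h-\frac1{16})\otimes A'(\al,\be)$ and $H=M_{\HH}(l)$, shows this is irreducible if and only if $V$ is irreducible over $\VV$, which is exactly the assertion of (1).

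For parts (2) and (3) the crucial preliminary is that when $l=0$ the ideal $\HH$ acts as zero on $L(c,h,0)$, that is $L(c,h,0)\cong L_{\VV}(c,h)^{\DD}$. I would prove this by induction on $r\in\frac12+\Z_+$, showing $h_{-r}\,\overline{\bf1}=0$ in $L(c,h,0)$: using $[d_n,h_{-r}]=r\,h_{n-r}$, the highest-weight condition together with the inductive hypothesis give $d_n\,h_{-r}\overline{\bf1}=0$ for all $n\geq1$, while $[h_s,h_{-r}]=s\,\delta_{s,r}\mathbf{l}=0$ (as $l=0$) gives $h_s\,h_{-r}\overline{\bf1}=0$ for $s>0$; hence $h_{-r}\overline{\bf1}$ is annihilated by $\DD^+$ yet has weight $h+r\neq h$, so by irreducibility of $L(c,h,0)$ it must be zero. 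Since then $\HH\,\overline{\bf1}=0$ and $\HH$ is an ideal, commuting $\HH$ to the right yields $\HH L(c,h,0)=0$. Granting this, part (2) is immediate from the second observation above: the tensor product equals $\bigl(L_{\VV}(c,h)\otimes A'(\al,\be)\bigr)^{\DD}$, which is irreducible over $\DD$ exactly when $L_{\VV}(c,h)\otimes A'(\al,\be)$ is irreducible over $\VV$.

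For part (3) I would invoke Lemma \ref{main.6}(2) with $V_1=A(\al,\be,\ga)$ and $V_2=L(c,h,0)$. This is the genuinely harder case: since $\mathbf{l}=0$ on $A(\al,\be,\ga)$, Lemma \ref{main.7} does not apply, and in fact $A(\al,\be,\ga)$ is reducible over $\HH$ alone, because the $h_r$ generate only the commutative Laurent-shift algebra. Fixing a finite $S\subseteq L(c,h,0)$, the identity $\HH L(c,h,0)=0$ together with restrictedness of $L(c,h,0)$ guarantees that $Ann_{\DD}(S)$ contains $\HH$ and all $d_m$ with $m>N$ for some $N$, and the task is to show $A(\al,\be,\ga)$ is irreducible over this larger algebra. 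Here the ``shifting technique'' enters: the operator $T=\ga^{-1}h_{1/2}^2$ acts as the invertible uniform shift $v_k\mapsto v_{k+1}$, so for $0\neq w=\sum_{k\in F}c_kv_k$ in a nonzero submodule $W$ the vector $T^{-m}d_mw=\sum_k c_k(\al+\be m-k)v_k$ lies in $W$, and subtracting $(\al+\be m)w$ shows that the position operator $v_k\mapsto kv_k$ preserves $W$; a Lagrange-interpolating polynomial in it then isolates a single $v_{k_0}\in W$, whence the shifts $T^j$ give $W=A(\al,\be,\ga)$. With condition (2) of Lemma \ref{main.6} verified, part (b) yields irreducibility of $A(\al,\be,\ga)\otimes L(c,h,0)\cong L(c,h,0)\otimes A(\al,\be,\ga)$. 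Thus the main obstacle is precisely this step—forcing irreducibility from the few Virasoro operators surviving in $Ann_{\DD}(S)$—which is why the $l=0$ identification $L(c,h,0)\cong L_{\VV}(c,h)^{\DD}$ is indispensable; a minor supporting fact is the irreducibility of $M_{\HH}(l)$ for $l\neq0$, used in part (1).
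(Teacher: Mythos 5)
Your proposal is correct and follows essentially the same route as the paper: part (1) via Lemmas \ref{main.3} and \ref{main.7}, part (2) via the identification $L(c,h,0)\cong L_{\VV}(c,h)^{\DD}$ (which you actually prove, while the paper merely asserts it), and part (3) via condition (2) of Lemma \ref{main.6}, your diagonal operator $T^{-m}d_m$ being the same device as the paper's $h_{\frac{1}{2}}h_{-m-\frac{1}{2}}d_m v_k=\ga(\al-k+m\be)v_k$. One trivial touch-up in (3): after isolating $v_{k_0}$, the integer shifts $T^j$ only reach $v_{k_0+j}$ with $j\in\Z$, so to get the half-integer-indexed vectors you should also apply single elements $h_r$ (this is exactly the paper's observation that each $v_k$ generates $A(\al,\be,\ga)$ as an $\HH$-module).
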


\begin{proof}
(1) From Lemma \ref{main.3} we know that
$$L(c,h,l)\cong L_{\VV}(c-1,h-\frac{1}{16})^{\DD}\otimes M_{\HH}(l)^{\DD}.$$
So we have
$$L(c,h,l)\otimes A'(\al,\be)^{\DD}\cong M_{\HH}(l)^{\DD}\otimes (L_{\VV}(c-1,h-\frac{1}{16})\otimes A'(\al,\be))^{\DD}.$$
It is clear that $M_{\HH}(l)\in\RR_{\HH}$ is irreducible when $l\neq 0$.
Therefore, the result follows from Lemma \ref{main.7}.

(2)  Since $l=0$, we see that $L(c,h,l)\cong L_{\VV}(c,h)^{\DD}$ as $\DD$-modules.
Thus
$$L(c,h,l)\otimes A'(\al,\be)^{\DD}\cong L_{\VV}(c,h)^{\DD}\otimes A'(\al,\be)^{\DD}
\cong (L_{\VV}(c,h)\otimes A'(\al,\be))^{\DD}.$$
Then the result is clear.

(3) Since $l=0$, we  have $L(c,h,l)\cong L_{\VV}(c,h)^{\DD}$ as $\DD$-modules.
Notice that for any finite subset $S\subseteq L_{\VV}(c,h)^\DD$,
there exists some $r$ such that $\VV^{(r)}+\HH\subseteq Ann_{\DD}(S)$.
Moreover, we   see that the   module $A(\al,\be,\ga)$ defined in (\ref{A(a,b,g)}) is irreducible as $(\VV^{(r)}+\HH)$-module by the following two observations:
\begin{enumerate}[$(1)$]
  \item[(a)]  Each $v_k$ can generate $A(\al,\be,\ga)$ as  an $\HH$-submodule;
  \item[(b)]  $h_{\frac{1}{2}}h_{-m-\frac{1}{2}}d_mv_k=\gamma(\alpha-k+m\beta)v_k$  for any $m>r$.
\end{enumerate}
So the result follows from Lemma \ref{main.6}.
\end{proof}

\begin{remark}
In the above theorem, we reduce the irreducibility of $\DD$-module  $L(c,h,l)\otimes A'(\al,\be)^{\DD}$
to the irreducibility of the $\VV$-module $L_{\VV}(c,h)\otimes A'(\al,\be)$
or $L_{\VV}(c-1,h-\frac{1}{16})\otimes A'(\al,\be)$.
Thanks to \cite{CGZ},
the irreducibility of  the $\VV$-module $L_{\VV}(c,h)\otimes A'(\al,\be)$ has been completely determined for $c,h,\al,\be\in\C$.
\end{remark}

Now we have only the most difficult case left:   $L(c,h,l)\otimes A(\al,\be,\ga)$ with $l\ne 0$. To determine the necessary and sufficient conditions for this tensor product module to be  irreducible we will break the arguments into several lemmas.
From now on we assume that $c,h,\al,\be\in\C,l, \ga\in\C^*$, and we will use the definition for $A(\al,\be,\ga)$ in (\ref{A(a,b,g)}).

\begin{lemma}\label{main.9}
(a).
For any $k\in\frac{\Z}{2}$,
 $${\bf{1}}\otimes v_{k}\notin U(\DD)({\bf{1}}\otimes
v_{k+\frac{1}{2}})\subseteq M(c,h,l)\otimes A(\al,\be,\ga).$$

(b). The $\DD$-module $M(c,h,l)\otimes A(\al,\be,\ga)$ is not irreducible.
\end{lemma}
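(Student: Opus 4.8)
The plan is to establish (a) and then read off (b) at once. Writing $w_0={\bf 1}\otimes v_{k+\frac12}$, note that ${\bf 1}\otimes v_k$ and $w_0$ have $d_0$-weights $h+\al-k$ and $h+\al-k-\frac12$, and $U(\DD)$ is graded by $\deg$; so the weight-$(h+\al-k)$ part of the submodule $U(\DD)w_0$ is exactly $U(\DD)_{-\frac12}w_0$, and it suffices to show ${\bf 1}\otimes v_k\notin U(\DD)_{-\frac12}w_0$. I would argue by contradiction, assuming $u\,w_0={\bf 1}\otimes v_k$ for some $u\in U(\DD)_{-\frac12}$, and run a descent on the level in $M(c,h,l)$, where by the level of a vector I mean $i$ if it lies in the weight-$(h+i)$ space $U(\DD^-)_{-i}{\bf 1}$ of $M(c,h,l)$, the level-$0$ space being $\C{\bf 1}$.

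First I would make each PBW monomial's action on $w_0$ explicit. Expanding $u$ over ordered monomials $XHY$ with $X\in U(\DD^-)$, $H\in U(\DD^0)$, $Y\in U(\DD^+)$, two facts simplify $XHY\,w_0$: since $\DD^+{\bf 1}=0$, any nonempty product of $\DD^+$-elements kills ${\bf 1}$, so by the Leibniz rule $Y$ acts only on the $A$-factor and $Y w_0=s\,{\bf 1}\otimes v_{k+\frac12+\deg Y}$ for a scalar $s$; and $\DD^0$ acts diagonally on ${\bf 1}\otimes v_j$, so $H$ contributes another scalar. Thus $XHY\,w_0=t\,X\big({\bf 1}\otimes v_{k+\frac12+\deg Y}\big)$ for a scalar $t$, and expanding $X$ over the two tensor factors by the Leibniz rule, the component of top level $-\deg X$ is $t\,(X{\bf 1})\otimes v_{k+\frac12+\deg Y}$, all other components having strictly smaller level. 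Note that every such monomial satisfies $-\deg X=\deg Y+\frac12\ge\frac12$.

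The heart of the argument is a level descent. Let $F$ be the largest value of $-\deg X$ among the monomials occurring in $u$; then $F\ge\frac12>0$, while ${\bf 1}\otimes v_k$ has level $0$, so the level-$F$ component of $u\,w_0$ must vanish. All top monomials share $\deg Y=F-\frac12$, hence the same index $k+F$, so this component equals $\big(\sum_{-\deg X_\alpha=F}c_\alpha t_\alpha X_\alpha\big){\bf 1}\otimes v_{k+F}$; since $M(c,h,l)$ is a Verma module, the $X{\bf 1}$ are linearly independent over distinct PBW monomials $X$, forcing $\sum_{X_\alpha=X}c_\alpha t_\alpha=0$ for each fixed $X$. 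The key step is then to factor the top part as $u_F=\sum_X X\,v_X$ with $v_X=\sum_{X_\alpha=X}c_\alpha H_\alpha Y_\alpha$: the relation just obtained gives $v_X w_0=\big(\sum_{X_\alpha=X}c_\alpha t_\alpha\big){\bf 1}\otimes v_{k+F}=0$, so $u_F w_0=\sum_X X(v_X w_0)=0$. Hence $u-u_F$ again sends $w_0$ to ${\bf 1}\otimes v_k$ but has strictly smaller top level. Iterating strictly decreases $F$ inside $\frac12\Z_{\ge\frac12}$, so after finitely many steps $u$ is replaced by $0$ and we obtain ${\bf 1}\otimes v_k=0$, a contradiction. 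I expect this factorization --- converting the vanishing of one leading component into the vanishing of the entire top part via freeness of the Verma module --- to be the one nontrivial step; the rest is bookkeeping.

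Finally, (b) follows immediately from (a): $N=U(\DD)w_0$ is a nonzero $\DD$-submodule of $M(c,h,l)\otimes A(\al,\be,\ga)$ which, by (a), does not contain ${\bf 1}\otimes v_k$ and is therefore proper. Thus the tensor product has a nontrivial proper submodule and is not irreducible.
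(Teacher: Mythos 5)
Your proof is correct and takes essentially the same route as the paper: after using the triangular decomposition and the highest-weight property to push $\DD^0$ and $\DD^+$ onto the $A(\al,\be,\ga)$-factor (where they act by scalars and index shifts), both arguments reduce the assumed membership to an identity of the form $\sum_i u_{-i/2}({\bf 1}\otimes v_{k+i/2})={\bf 1}\otimes v_k$ with $u_{-i/2}\in \UU(\DD^-)$, and both obtain the contradiction by projecting onto the deepest level of the Verma factor, where the top component survives by freeness of $M(c,h,l)$ over $\UU(\DD^-)$. Your iterative descent is only a cosmetic variant of the paper's one-step choice of a representation with maximal nonzero coefficient $u_{-r/2}\neq 0$ (it compensates for the possibility that a PBW monomial $XHY$ kills $w_0$ through its $HY$-part), and part (b) follows from (a) identically in both.
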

\begin{proof} (a).
Note that for any $k\in\frac{\Z}{2}$,
$$U(\DD)({\bf{1}}\otimes v_{k+\frac{1}{2}})=U(\DD^-)U(\DD^+)({\bf{1}}\otimes
v_{k+\frac{1}{2}})\subseteq \sum_{i\in \Z_+}U(\DD^-)({\bf{1}}\otimes v_{k+\frac{1}{2}+\frac{i}{2}}).$$
So, it is sufficient to show that ${\bf{1}}\otimes v_k\notin \displaystyle{\sum_{i\in \Z_+}}U(\DD^-)({\bf{1}}\otimes v_{k+\frac{1}{2}+\frac{i}{2}}).$

Assume that ${\bf{1}}\otimes v_k\in \displaystyle{\sum_{i\in \Z_+}}U(\DD^-)({\bf{1}}\otimes v_{k+\frac{1}{2}+\frac{i}{2}})$.
Then by PBW Theorem we know that
$${\bf{1}}\otimes v_k=\sum_{i=1}^r u_{-\frac{i}{2}}({\bf{1}}\otimes v_{k+\frac{i}{2}}),$$
where $u_{-\frac{i}{2}}\in U(\DD^-)_{-\frac{i}{2}}$ with $u_{-\frac{r}{2}}\ne0$.
Thus we have $$\sum_{i=1}^r u_{-\frac{i}{2}}({\bf{1}}\otimes v_{k+\frac{i}{2}})-{\bf{1}}\otimes v_k=0,$$
i.e.,
$$\sum_{i=1}^r (u_{-\frac{i}{2}}{\bf{1}}\otimes v_{k+\frac{i}{2}})
+\sum_{i=1}^r ({\bf{1}}\otimes (u_{-\frac{i}{2}}v_{k+\frac{i}{2}})-{\bf{1}}\otimes v_k=0.$$
This is  impossible since the nonzero vector
$u_{-\frac{r}{2}}{\bf{1}}\otimes v_{k+\frac{r}{2}}$ cannot be in the span of the other vectors in the above formula.

(b) follows directly from (a).
\end{proof}

\begin{lemma}\label{main.10} Let $M$ be a nonzero $\DD$-submodule of $L(c,h,l)\otimes A(\al,\be,\ga)$.

(a).  There exists $k\in\frac{\Z}{2}$ such that $\bar{\bf{1}}\otimes
v_{k}\in M$.

(b). There exists $k\in\frac{\Z}{2}$ such that $\bar{\bf{1}}\otimes
v_{i}\in M$ for all $i\in \frac12\Z$ with $i\ge k$.
\end{lemma}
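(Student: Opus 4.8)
The plan is to prove (a) by reducing an arbitrary nonzero element of $M$ to a single ``top'' vector $\bar{\bf 1}\otimes v_k$, and then to deduce (b) from (a) by a one-line propagation with $h_{\frac12}$. Since $M$ is a submodule of a weight module it is again a weight module, so I start from a nonzero weight vector $w=\sum_{j=1}^{t}w_j\otimes v_{k_j}\in M$, with the $w_j\in L(c,h,l)$ nonzero weight vectors and $k_1<\dots<k_t$. Writing the depth $e_j:=\wt(w_j)-h\in\frac12\Z$, the condition that $w$ be a weight vector forces $e_j=\wt(w)-h-\al+k_j$, so the $e_j$ are strictly increasing in $k_j$; hence $w$ has a unique shallowest term ($j=1$) and a unique deepest term ($j=t$), and a summand sits at depth $0$ exactly when its $L$-factor is a multiple of $\bar{\bf 1}$.

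First I produce a nonzero depth-$0$ component. Because $L(c,h,l)$ is irreducible, there is $u\in\UU(\DD^+)$ lowering the $d_0$-weight by $e_1$ with $uw_1=\bar{\bf 1}$. When $u$ is applied to $w$ and expanded through the coproduct, a summand $w_j\otimes v_{k_j}$ can land at depth $0$ only via a splitting whose $L$-factor lowers $w_j$ all the way into $\C\bar{\bf 1}$, i.e. only when $e_j=e_1$; by strict monotonicity this occurs for $j=1$ alone, with the $A$-factor trivial. Thus $uw=\bar{\bf 1}\otimes v_{k_1}+(\text{terms of depth}\ge\tfrac12)$ with top coefficient exactly $1$, and no cancellation is possible precisely because the shallowest term is unique. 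I replace $w$ by $uw$, so from now on $w$ has a nonzero depth-$0$ term.

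Next I strip the deeper terms, inducting on the maximal depth $E$ of such a $w$. The engine is the identity, valid since $d_mw_j=0$ once $m$ exceeds the depth of $w_j$,
\[
d_m w=\sum_{j}(\al+\be m-k_j)\,w_j\otimes v_{m+k_j}\qquad(m>E),
\]
which shows $d_m$ (and likewise $h_r$, $r>E$) acts on $\sum_j\C w_j\otimes A$ purely through the $A$-factor. To lower $E$ I use the two-dimensional space $\C d_1\oplus\C h_{\frac12}^2\subseteq\UU(\DD^+)$ together with $d_1v_k=(\al+\be-k)v_{k+1}$, $h_{\frac12}^2v_k=\ga v_{k+1}$ and $d_1\bar{\bf 1}=h_{\frac12}\bar{\bf 1}=0$: for $X=a\,d_1+b\,h_{\frac12}^2$ the only depth-$E$ contribution of $X(w_t\otimes v_{k_t})$ is $w_t\otimes[a(\al+\be-k_t)+b\ga]v_{k_t+1}$, while shallower summands stay at depth $<E$. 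Choosing $(a,b)\ne0$ on the line $a(\al+\be-k_t)+b\ga=0$ removes all depth-$E$ terms and yields $Xw\in M$ of maximal depth $\le E-\tfrac12$; provided $Xw$ is still nonzero with a surviving top component (addressed below), iterating down to $E=0$ leaves a single $c\,\bar{\bf 1}\otimes v_k$ with $c\ne0$, proving (a).

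The delicate point is exactly this non-vanishing: along the constraint line the new depth-$0$ coefficient is linear in the remaining parameter but also receives feedback from the depth-$\tfrac12$ and depth-$1$ summands, whose images under $d_1,h_{\frac12}^2,h_{\frac12}$ automatically lie in the one-dimensional space $\C\bar{\bf 1}$, so in principle it could vanish identically; controlling this—if necessary by enlarging the family of lowering operators to weight-drop $\tfrac32,2,\dots$, which carry more free parameters—is the main technical burden, and is presumably why the full argument is split across several lemmas. Part (b) is then immediate: since $h_{\frac12}\bar{\bf 1}=0$ and $h_{\frac12}v_i$ is a nonzero multiple of $v_{i+\frac12}$, applying $h_{\frac12}$ repeatedly to $\bar{\bf 1}\otimes v_k\in M$ gives $\bar{\bf 1}\otimes v_i\in M$ for every $i\in\frac12\Z$ with $i\ge k$.
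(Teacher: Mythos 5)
Your opening normalization (producing a weight vector whose depth-zero component is exactly $\bar{\bf 1}\otimes v_{k_1}$) is correct and even fills in a detail the paper takes for granted, and your part (b) is the same one-line argument as the paper's. But the heart of (a) --- the depth-stripping induction --- has a genuine gap, which you yourself flag, and it is fatal as stated. Once you impose the linear constraint $a(\al+\be-k_t)+b\ga=0$ that kills the depth-$E$ output, the pair $(a,b)$ is determined up to scale, so there is no freedom left to protect anything else; the coefficient of the depth-zero output is then a fixed number that can vanish. Concretely, take $w=\bar{\bf 1}\otimes v_{k}+w^{(1/2)}\otimes v_{k+1/2}$ with $h_{1/2}w^{(1/2)}=\lambda\bar{\bf 1}$ and $h_{1/2}v_{k+1/2}=\mu v_{k+1}$, $\mu\in\{1,\ga\}$. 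For $X=ad_1+bh_{1/2}^2$ one computes $Xw=\bigl(a(\al+\be-k)+b\ga+2b\lambda\mu\bigr)\bar{\bf 1}\otimes v_{k+1}+\bigl(a(\al+\be-k-\tfrac12)+b\ga\bigr)w^{(1/2)}\otimes v_{k+3/2}$, and on the constraint line the first coefficient reduces to $a/2+2b\lambda\mu$, which vanishes identically when $\al+\be-k-\tfrac12\ne 0$ and $\lambda=\ga/\bigl(4\mu(\al+\be-k-\tfrac12)\bigr)$. For such $w$ every admissible $X$ gives $Xw=0$, and your induction cannot proceed (nor does ``enlarging to weight-drop $\tfrac32,2,\dots$'' come with a proof: each new operator also creates new cross-terms in exactly the coefficients you must keep nonzero).

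The paper resolves this tension by a structurally different device, which is the idea your argument is missing. Choose $n$ so large that $d_j$ and $h_{j+\frac12}$ with $j\ge n$ annihilate every $L$-component $w_{-i/2}$ of $w$. Since $A(\al,\be,\ga)$ is irreducible as a $(\VV^{(n)}+\HH)$-module, Lemma \ref{main.5} supplies a single $u\in\UU(\VV^{(n)}+\HH)$ with $uv_{k+i/2}=\delta_{i,0}v_0$ for all $i$, i.e.\ it kills \emph{all} lower $A$-components at once instead of one depth at a time. The element $u$ still involves $h_r$ with small $r$, which do not annihilate the $w_{-i/2}$; the key trick is that on $A(\al,\be,\ga)$ the operator $h_jh_i$ depends only on $i+j$, so in $h_ru$ (with $r$ large) every such pair may be replaced by $h_{(i+j)/2}h_{(i+j)/2}$ or $h_{(i+j)/2-1/2}h_{(i+j)/2+1/2}$, yielding $u'\in\UU(\DD^{(n,n)})$ that acts on $A(\al,\be,\ga)$ exactly as $h_ru$ does but all of whose generators kill the $w_{-i/2}$. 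Then $u'w=\bar{\bf 1}\otimes v_r\in M$ in one step, with no non-vanishing condition to verify. Without this ``push the Heisenberg indices up'' rewriting (or a worked-out substitute for your enlarged family of operators), your proof of (a) is incomplete.
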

\begin{proof} (a). Since $M$ is a   weight $\DD$-module,  we can take a nonzero weight vector
$$w=\sum_{i=0}^sw_{-\frac{i}{2}}\otimes v_{k+\frac{i}{2}}\in M, \text{ with }w_{0}=\bar{\bf{1}},k\in\frac{\Z}{2}, w_{-\frac{i}{2}}\in U(\DD^-)_{-\frac{i}{2}}\bar{\bf{1}}.$$
There is an  $n\in\N$ such that
$d_jw_{-\frac{i}{2}}=h_{\frac{1}{2}+j}w_{-\frac{i}{2}}=0$ for all $j\ge n, i=0,1,\cdots,s$.
Since $A(\al,\be,\ga)$ is an irreducible $(\VV^{(n)}+\HH)$-module,
by Lemma \ref{main.5}  we may take some $u\in U(\VV^{(n)}+\HH)$
such that $$uv_{k+\frac{i}{2}}=\delta_{0,\frac{i}{2}}v_{0},\forall i=0,1,\ldots,s.$$
We can write $u=\sum_i u_iu_i'$ with $u_i\in U(\HH)$ and $u_i'\in U(\VV^{(n)})$ by PBW Theorem.
For any $v\in\ A(\al,\be,\ga)$, we have
\begin{equation*}
\begin{split}
\begin{cases}
&h_jh_iv=h_{\frac{i+j}{2}}h_{\frac{i+j}{2}} v,\quad {\rm if } \quad i+j\in2\Z+1, \\
&h_jh_iv=h_{\frac{i+j}{2}-\frac{1}{2}}h_{\frac{i+j}{2}+\frac{1}{2}} v,\quad {\rm if } \quad i+j\in2\Z.
\end{cases}
\end{split}
\end{equation*}

Take a sufficiently large $r$. By repeatedly and properly replacing
$h_jh_i$ with $h_{\frac{i+j}{2}}h_{\frac{i+j}{2}}$
or $h_{\frac{i+j}{2}-\frac{1}{2}}h_{\frac{i+j}{2}+\frac{1}{2}}$
in the expression of  $h_ru$ we can result in an element   $u'\in U(\DD^{(n,n)})$ such that
$$u'v_{k+\frac{i}{2}}=h_ruv_{k+\frac{i}{2}}=\delta_{0,\frac{i}{2}}v_{r},\forall i=0,1,\ldots,s.$$
Since $u'w_{-\frac{i}{2}}=0$ for all $i=0,1,\ldots,s$, we deduce that  $\bar{\bf{1}}\otimes v_{r}=u'w\in M$.

(b). This follows by applying $h_i$ to  $\bar{\bf{1}}\otimes
v_{k}\in M$ in (a) for all $i\in \frac{\N}{2}$.
\end{proof}

Now, we need to introduce the ``shifting technique".
For any highest weight $\DD$-module $V$ with highest weight vector ${\bf v}$,
it is easy to check that $V\otimes \C[y^{\pm\frac{1}{2}}]$
can be endowed a $\DD$-module structure by following actions
\begin{equation}\label{eq1}
\begin{split}
&d_n(P{\bf{v}}\otimes y^i)=(d_n+\al+\be n+\deg(P)-i)P{\bf{v}}\otimes y^{i+n},\\
&h_r(P{\bf{v}}\otimes y^i)=(h_r+\delta_{i-\deg(P)\in\Z}+\ga \delta_{i-\deg(P)\notin\Z})P{\bf{v}}\otimes y^{i+r},\\
&{\bf{c}}(P{\bf{v}}\otimes y^i)=c(P{\bf{v}}\otimes y^i),\quad {\bf{l}}(P{\bf{v}}\otimes y^i)=l(P{\bf{v}}\otimes y^i),
\end{split}
\end{equation}
for $n\in\Z,r\in\frac{1}{2}+\Z,i\in\frac{\Z}{2}$ and homogeneous element $P\in \UU(\DD^{-})$.
Then it is not hard to see that
$$V\otimes A(\al,\be,\ga)\cong V\otimes \C[y^{\pm\frac{1}{2}}]$$
as $\DD$-modules.
Specially, we have
\begin{eqnarray*}
&&M(c,h,l)\otimes A(\al,\be,\ga)\cong M(c,h,l)\otimes \C[y^{\pm\frac{1}{2}}],\\
&&L(c,h,l)\otimes A(\al,\be,\ga)\cong L(c,h,l)\otimes \C[y^{\pm\frac{1}{2}}].
\end{eqnarray*}
Henceforth we may view $M(c,h,l)\otimes A(\al,\be,\ga),L(c,h,l)\otimes A(\al,\be,\ga)$ as
$M(c,h,l)\otimes \C[y^{\pm\frac{1}{2}}],L(c,h,l)\otimes \C[y^{\pm\frac{1}{2}}]$ respectively.
Moreover, for any $k\in\frac{\Z}{2}$,
$$M(c,h,l)\otimes y^k=\{v\in M(c,h,l)\otimes \C[y^{\pm\frac{1}{2}}]\mid d_0v=(\al+h-k)v\},$$
thus $ M(c,h,l)\otimes \C[y^{\pm\frac{1}{2}}]=\oplus_{i\in\frac{\Z}{2}}(M(c,h,l)\otimes y^i)$.
Denote
\begin{equation*}
\begin{split}
&W^{(k)}=\sum_{i\in\frac{\Z_+}{2}}\UU(\DD)({\bf{1}}\otimes y^{k+i})\subseteq M(c,h,l)\otimes \C[y^{\pm\frac{1}{2}}],\\
&W^{(k)}_n=W^{(k)}\cap(M(c,h,l)\otimes y^n), \forall n\in\frac{\Z}{2}.
\end{split}
\end{equation*}
We have following observations:

\begin{lemma}\label{main.11}
\begin{enumerate}[$(1)$]
\item $W^{(k)}=\sum_{i\in \frac{\Z_+}{2}} U(\DD^-)({\bf{1}}\otimes y^{k+i})$.
\item $W^{(k)}\supseteq \oplus_{k\leq i\in\frac{\Z}{2}}M(c,h,l)\otimes y^{i}$.
\item $M(c,h,l)\otimes y^{k-\frac{1}{2}}=W^{(k)}_{k-\frac{1}{2}}\oplus \C({\bf{1}}\otimes y^{k-\frac{1}{2}})$.
\item  If $P\in U(\DD^-)$ is homogeneous such that
$$P{\bf{1}}\otimes y^{k-\frac{1}{2}}\in W^{(k)}_{k-\frac{1}{2}},$$
then $(U(\DD^-)P {\bf{1}})\otimes y^{k-\frac{1}{2}} \subseteq W^{(k)}_{k-\frac{1}{2}}$.
\end{enumerate}
\end{lemma}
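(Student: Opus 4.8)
I would establish the four parts in order, letting part (2) do the real work: part (1) is a direct PBW computation, and parts (3)--(4) both follow from (2) together with one elementary identity read off from (\ref{eq1}). For (1), I would start from the PBW factorization $U(\DD)=U(\DD^-)U(\DD^0)U(\DD^+)$ and check that $U(\DD^0)U(\DD^+)$ sends each generator ${\bf 1}\otimes y^{k+i}$ ($i\in\frac{\Z_+}{2}$) into the span of the generators. Here $\DD^0=\C d_0\oplus\C{\bf c}\oplus\C{\bf l}$ acts on every ${\bf 1}\otimes y^j$ by a scalar, and since ${\bf 1}$ is a highest weight vector the positive modes $d_n,h_r$ ($n,r>0$) annihilate ${\bf 1}$; so by (\ref{eq1}) they send ${\bf 1}\otimes y^{k+i}$ to a scalar multiple of ${\bf 1}\otimes y^{k+i+n}$ (resp. ${\bf 1}\otimes y^{k+i+r}$), which is again one of the generators because the $y$-exponent only grows. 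Iterating over a $U(\DD^+)$-monomial and then applying $U(\DD^0)$ stays inside that span, giving the nontrivial inclusion $\subseteq$; the reverse inclusion is immediate, which is (1).

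The engine for the rest is a lifting identity. For a single negative generator $x\in\{d_{-n},h_{-r}\}$, whose action shifts the $y$-exponent by $-m$ with $m\in\{n,r\}\ge\frac12$, formula (\ref{eq1}) yields $x\big((Q{\bf 1})\otimes y^{j+m}\big)=(xQ{\bf 1})\otimes y^{j}+s\,(Q{\bf 1})\otimes y^{j}$ for some scalar $s$; solving for the first summand expresses $(xQ{\bf 1})\otimes y^{j}$ through $x$ applied at the higher exponent $j+m$ and a scalar multiple of $(Q{\bf 1})\otimes y^{j}$ of strictly smaller $-\deg$. Since $M(c,h,l)=U(\DD^-){\bf 1}$ is spanned by the weight vectors $(P{\bf 1})\otimes y^i$ with $P$ homogeneous, I would prove (2) by induction on $-\deg P\ge0$ for fixed $i\ge k$: the base $\deg P=0$ is immediate because ${\bf 1}\otimes y^i$ is a generator of $W^{(k)}$, and for $P=xP'$ both $x\big((P'{\bf 1})\otimes y^{i+m}\big)$ and $(P'{\bf 1})\otimes y^i$ lie in $W^{(k)}$ by the inductive hypothesis (the decisive point being $i+m\ge k$), so the identity places $(P{\bf 1})\otimes y^i$ in $W^{(k)}$ as well; summing gives $M(c,h,l)\otimes y^i\subseteq W^{(k)}$ for every $i\ge k$.

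Parts (3) and (4) are then formal. The directness in (3), namely ${\bf 1}\otimes y^{k-\frac12}\notin W^{(k)}$, is exactly Lemma \ref{main.9}(a) (indeed its proof) once ${\bf 1}\otimes v_{k-\frac12}$ is identified with ${\bf 1}\otimes y^{k-\frac12}$ and the generators $\{{\bf 1}\otimes v_{k+j}\}$ of $W^{(k)}$ with $\{{\bf 1}\otimes y^{k+j}\}$; by (1) it is enough to exclude membership in $\sum_j U(\DD^-)({\bf 1}\otimes y^{k+j})$. For the spanning half I would rerun the induction of (2) at the fixed exponent $j=k-\frac12$, concluding this time only membership in $W^{(k)}+\C({\bf 1}\otimes y^{k-\frac12})$: the lifted term still lands in $W^{(k)}$ by (2) because $k-\frac12+m\ge k$, while the base $P=1$ produces the extra summand $\C({\bf 1}\otimes y^{k-\frac12})$. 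Finally, (4) is the same argument with the target exponent pinned at $k-\frac12$: assuming $(P{\bf 1})\otimes y^{k-\frac12}\in W^{(k)}_{k-\frac12}$, I would prove $(QP{\bf 1})\otimes y^{k-\frac12}\in W^{(k)}_{k-\frac12}$ for all homogeneous $Q$ by induction on $-\deg Q$, peeling one generator $x$ off $Q$; the lifted piece $x\big((Q'P{\bf 1})\otimes y^{k-\frac12+m}\big)$ sits in $W^{(k)}_{k-\frac12}$ since its base has exponent $\ge k$ and hence lies in $W^{(k)}$ by (2), while the remaining piece is the inductive hypothesis, whose base case is precisely the standing assumption on $P$.

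The step needing the most care is the bookkeeping inside these inductions: I must verify that every exponent created by a lift is $\ge k$ --- which is exactly where $m\ge\frac12$ (valid for both $d_{-n}$ and $h_{-r}$) and the already-proved part (2) enter --- and that the scalar shift terms coming from (\ref{eq1}) never move a vector out of its weight space. Conceptually the crux is to read (2) as the statement ``every level above $k$ already lies in $W^{(k)}$'', so that one may lift freely to such a level and then descend back to level $k-\frac12$; with that in hand, (3) and (4) amount to turning the crank on the identity.
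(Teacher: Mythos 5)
Your proposal is correct and follows essentially the same route as the paper: part (1) via the PBW factorization (the paper suppresses the scalar action of $\DD^0$ that you spell out), part (2) via the lifting identity read off from (\ref{eq1}) together with an induction (the paper inducts on the PBW monomial length $s+t$ rather than on $-\deg P$, a cosmetic difference), and parts (3)--(4) by rerunning that induction at exponent $k-\tfrac12$, with directness in (3) coming from Lemma \ref{main.9}(a), exactly as the paper does. One small wording caveat: your phrase ``for fixed $i\ge k$'' in (2) should read ``simultaneously for all $i\ge k$,'' since the inductive hypothesis is invoked at the shifted exponent $i+m$ --- which, as your own bookkeeping remark makes clear, is what your argument actually does.
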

\begin{proof}
(1) This follows from the following computations:
\begin{equation*}
\begin{split}
W^{(k)}&=\sum_{i\in\frac{\Z_+}{2}}\UU(\DD)({\bf{1}}\otimes y^{k+i})
=\sum_{i\in\frac{\Z_+}{2}}\UU(\DD^-)\UU(\DD^+)({\bf{1}}\otimes y^{k+i}) \\
&=\sum_{i\in\frac{\Z_+}{2}}\UU(\DD^-)(\sum_{j\in\frac{\Z_+}{2}}({\bf{1}}\otimes y^{k+i+j}))
=\sum_{i\in\frac{\Z_+}{2}}\UU(\DD^-)({\bf{1}}\otimes y^{k+i}).
\end{split}
\end{equation*}
(2) By PBW Theorem, each nonzero element of $M(c,h,l)\otimes y^{i}$ can be written as linear combinations of vectors
$$v_{s,t}=h_{-p_s}\dots h_{-p_1}d_{-q_t}\dots d_{-q_1}{\bf{1}}\otimes y^i,$$
where $q_j\in\N,p_{j'}\in \frac{1}{2}+\Z_+$ for $1\leq j\leq t,1\leq j'\leq s$.

Now, we show the result by induction on $s+t$.
If $s+t=0$, then $v_{s,t}={\bf{1}}\otimes y^i\in W^{(k)},\forall k\leq i\in\frac{\Z}{2}$ is trivial.
Suppose that the result is right when $s+t=m$.
For $s+t=m+1$, we may assume that
$$ v_{s,t}=h_{-r}Q{\bf{1}}\otimes y^i \text{ where } Q\in U(\DD^-) ,r>0, \text{ or }$$
$$v_{s,t}=d_{-n}Q{\bf{1}}\otimes y^i, \text{ where } Q\in U(\VV^-) , n>0. $$
Then by equations \ref{eq1} and inductive hypothesis on $Q$, we have
$$\aligned
v_{s,t}=&h_{-r}Q{\bf{1}}\otimes y^i\\
=&h_{-r}(Q{\bf{1}}\otimes y^{i+r})-(\delta_{i+r-\deg(Q)\in\Z}+\ga \delta_{i+r-\deg(Q)\notin\Z})Q{\bf{1}}\otimes y^{i}\in W^{(k)},\text{ or }\\
v_{s,t}=&d_{-n}Q{\bf{1}}\otimes y^i\\
=&d_{-n}(Q{\bf{1}}\otimes y^{i+n})-(\al-\be n+\deg(Q)-i-n)Q{\bf{1}}\otimes y^i\in W^{(k)}.\endaligned $$
Thus $M(c,h,l)\otimes y^{i}\in W^{(k)}, \forall k\leq i\in\frac{\Z}{2}$.

(3) $W^{(k)}_{k-\frac{1}{2}}\oplus \C({\bf{1}}\otimes y^{k-\frac{1}{2}})\subseteq M(c,h,l)\otimes y^{k-\frac{1}{2}}$ is clear by definition.
$W^{(k)}_{k-\frac{1}{2}}\cap\C({\bf{1}}\otimes y^{k-\frac{1}{2}})=0$
follows from the Lemma \ref{main.9}.

Now we show that
$M(c,h,l)\otimes y^{k-\frac{1}{2}}\subseteq W^{(k)}_{k-\frac{1}{2}}\oplus
\C({\bf{1}}\otimes y^{k-\frac{1}{2}})$.
Note that every nonzero element of $M(c,h,l)\otimes y^{k-\frac{1}{2}}$
can be written as linear combinations of vectors
$$v_{s,t}=h_{-p_s}\dots h_{-p_1}d_{-q_t}\dots d_{-q_1}{\bf{1}}\otimes y^{k-\frac{1}{2}},$$
where $q_j\in\N,p_{j'}\in \frac{1}{2}+\Z_+$ for $1\leq j\leq t,1\leq j'\leq s$
by PBW Theorem.
By induction on $s+t$ which is similar to (2), we obtain $v_{s,t}\in W^{(k)}\oplus
\C({\bf{1}}\otimes y^{k-\frac{1}{2}})$.
Thus $M(c,h,l)\otimes y^{k-\frac{1}{2}}\subseteq W^{(k)}\oplus
\C({\bf{1}}\otimes y^{k-\frac{1}{2}})$.
Then $M(c,h,l)\otimes y^{k-\frac{1}{2}}\subseteq W^{(k)}_{k-\frac{1}{2}}\oplus
\C({\bf{1}}\otimes y^{k-\frac{1}{2}})$.

(4) By PBW Theorem, every nonzero element of $(U(\DD^-)P {\bf{1}})\otimes y^{k-\frac{1}{2}}$
can be written as linear combinations of vectors
$$v_{s,t}=h_{-p_s}\dots h_{-p_1}d_{-q_t}\dots d_{-q_1}P{\bf{1}}\otimes y^{k-\frac{1}{2}},$$
where $q_j\in\N,p_{j'}\in \frac{1}{2}+\Z_+$ for $1\leq j\leq t,1\leq j'\leq s$.
By induction on $s+t$ which is similar to (2), we obtain $v_{s,t}\in W^{(k)}$.
Thus $(U(\DD^-)P {\bf{1}})\otimes y^{k-\frac{1}{2}}\in W^{(k)}_{k-\frac{1}{2}}$.
\end{proof}
Now, similar to the definition of $\varphi_{n}$ in \cite{CGZ},
we want to define the linear map $\rho_n:\UU(\DD^-)\rightarrow \C$, for any $n\in \frac{\Z}{2}$.
Let $T(\DD^-)$ denote the tensor algebra of $\DD^-$, i.e.,
$$T(\DD^-)=\C\oplus (\DD^-)\oplus (\DD^-\otimes\DD^-)\oplus\cdots,$$
which is a free associative algebra.
For any $k\in\N$, define $\deg(d_{-k})=-k,\deg(h_{\frac{1}{2}-k})=\frac{1}{2}-k$ and $\deg(1)=0$,
then $T(\DD^-)$ is a $\frac{\Z}{2}$-graded algebra.
For any $n\in \frac{\Z}{2},\al,\be\in\C$ and $\ga\in\C^*$,
we may inductively define the linear map $\overline{\rho_n}:T(\DD^-)\rightarrow \C$ as follows:
\begin{equation}\label{eq2}
\begin{aligned}
&\overline{\rho_n}(1)=1,\\
&\overline{\rho_n}(h_{-r}u)=-(\delta_{n+r+k\in\Z}+\ga \delta_{n+r+k\notin\Z})\overline{\rho_n}(u),\\
&\overline{\rho_n}(d_{-i}u)=-(\al-\be i-k-i-n)\overline{\rho_n}(u),
\end{aligned}
\end{equation}
where $r, i>0$ and $u$ is an arbitrary homogeneous element of degree $-k$ in $T(\DD^-)$.
Note that $\UU(\DD^-)$ is the quotient algebra $T(\DD^-)/I$
where $I$ is the two-sided ideal generated by
\begin{eqnarray*}
&&d_{-i}d_{-j}-d_{-j}d_{-i}-(j-i)d_{-i-j},\\
&&d_{-i}h_{-r}-h_{-r}d_{-i}-rh_{-i-r},\\
&&h_{-r}h_{-r'}-h_{-r'}h_{-r},
\end{eqnarray*}
for $i,j\in\N,r,r'\in \frac{1}{2}+\Z_+$.
It is not hard to see that $I\subseteq ker(\overline{\rho_n})$.
So $\overline{\rho_n}$ induces a linear map $\UU(\DD^-)\rightarrow \C$, denoted by $\rho_n$.
Now we have the following lemma.
\begin{lemma} \label{main.12} Let $P \in \UU(\DD^-),n\in\frac{\Z}{2}$. Then
\begin{enumerate}[$(1)$]
\item $P{\bf{1}}\otimes y^n \equiv \rho_n(P){\bf{1}}\otimes y^n \pmod{W^{(n+\frac{1}{2})}};$
\item $P{\bf{1}}\otimes y^n \in W^{(n+\frac{1}{2})}$ if and only if $\rho_n(P)=0$.
\end{enumerate}
\end{lemma}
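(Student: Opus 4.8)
The plan is to reduce both parts to a single statement about one linear functional. Working inside the shifted realization $M(c,h,l)\otimes\C[y^{\pm\frac12}]$, I would first invoke Lemma \ref{main.11}(3) with $k=n+\frac12$, which gives the direct sum decomposition of the weight-$n$ space $M(c,h,l)\otimes y^n = W^{(n+\frac12)}_n \oplus \C({\bf 1}\otimes y^n)$. Since $M(c,h,l)\otimes y^n$ is a single $d_0$-weight space and $W^{(n+\frac12)}$ is a graded $\DD$-submodule with $W^{(n+\frac12)}\cap(M(c,h,l)\otimes y^n)=W^{(n+\frac12)}_n$, every vector $P{\bf 1}\otimes y^n$ has a well-defined component along ${\bf 1}\otimes y^n$ modulo $W^{(n+\frac12)}$; let $\theta(P)$ denote its coefficient. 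Because the Verma module is free over $\UU(\DD^-)$, the resulting map $\theta\colon\UU(\DD^-)\to\C$ is linear. Part (1) is then exactly the identity $\theta=\rho_n$, and granting it, part (2) is immediate: as $P{\bf 1}\otimes y^n\in M(c,h,l)\otimes y^n$, membership in $W^{(n+\frac12)}$ is membership in $W^{(n+\frac12)}_n$, and by the direct sum this happens iff the ${\bf 1}\otimes y^n$-component vanishes, i.e.\ iff $\rho_n(P)=0$.

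To prove $\theta=\rho_n$ I would check the equality on monomials $w=X_1X_2\cdots X_m$ of the tensor algebra $T(\DD^-)$, with each $X_j\in\{d_{-i},h_{-r}\}$, whose images span $\UU(\DD^-)$; since $I\subseteq\ker\overline{\rho_n}$ and the module action factors through $\UU(\DD^-)$, both $\theta$ and $\rho_n$ are unambiguous on these images, so it suffices to match recursions. The base case $w=1$ gives $\theta(1)=1=\overline{\rho_n}(1)$. For the inductive step, write $P=X_1Q$ with $Q$ homogeneous of degree $-k$, and use the shifted action (\ref{eq1}) to peel off the leading generator, in the case $X_1=h_{-r}$ as
\begin{equation*}
(h_{-r}Q{\bf 1})\otimes y^n = h_{-r}\bigl(Q{\bf 1}\otimes y^{n+r}\bigr)-\bigl(\delta_{n+r+k\in\Z}+\ga\,\delta_{n+r+k\notin\Z}\bigr)\,Q{\bf 1}\otimes y^n,
\end{equation*}
and in the case $X_1=d_{-i}$ as
\begin{equation*}
(d_{-i}Q{\bf 1})\otimes y^n = d_{-i}\bigl(Q{\bf 1}\otimes y^{n+i}\bigr)-(\al-\be i-k-i-n)\,Q{\bf 1}\otimes y^n.
\end{equation*}
Since $r\ge\frac12$ and $i\ge1$, the auxiliary vectors $Q{\bf 1}\otimes y^{n+r}$ and $Q{\bf 1}\otimes y^{n+i}$ sit in $M(c,h,l)\otimes y^{m}$ with $m\ge n+\frac12$, hence in $W^{(n+\frac12)}$ by Lemma \ref{main.11}(2); as $W^{(n+\frac12)}$ is a submodule, the first term on each right-hand side is absorbed modulo $W^{(n+\frac12)}$. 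Therefore $\theta(X_1Q)$ equals the displayed scalar times $\theta(Q)$, which is precisely the defining recursion (\ref{eq2}) for $\overline{\rho_n}$, so the induction closes and $\theta=\rho_n$.

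I expect the only genuine work to be bookkeeping in the inductive step: one must carry the degree $-k$ of the homogeneous tail $Q$ correctly so that both the scalar $\al-\be i-k-i-n$ and the flags $\delta_{n+r+k\in\Z},\,\delta_{n+r+k\notin\Z}$ reproduce (\ref{eq2}) exactly, and one must confirm that the shifted exponents $n+r,\,n+i$ are at least $n+\frac12$ so that Lemma \ref{main.11}(2) applies and the leading $\DD^-$-term drops out. Everything else is formal: the well-definedness of $\theta$ rests on the direct sum of Lemma \ref{main.11}(3), and part (2) is a one-line consequence of that decomposition together with part (1).
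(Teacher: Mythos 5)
Your proposal is correct and follows essentially the same route as the paper: the same peeling computations via the shifted action (\ref{eq1}), absorption of the leading term into $W^{(n+\frac{1}{2})}$ using Lemma \ref{main.11}(2) and the submodule property, and part (2) deduced from the fact that ${\bf 1}\otimes y^n\notin W^{(n+\frac{1}{2})}$ (your Lemma \ref{main.11}(3) versus the paper's Lemma \ref{main.9}, which are the same fact). Your repackaging through the functional $\theta$ and the induction on tensor-algebra monomials rather than on $\deg(P)$ for homogeneous $P\in\UU(\DD^-)$ is only a cosmetic difference.
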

\begin{proof}
$(1)$ It is enough to show it for homogeneous $P$ by induction on $\deg(P)$. If $\deg(P)=0$, i.e. $P\in\C$, then
$P{\bf{1}}\otimes y^n \equiv \rho_n(P){\bf{1}}\otimes y^n \pmod {W^{(n+\frac{1}{2})}}$
is trivial.
Suppose $ Q\in \UU(\DD^-)$ is a homogeneous element
and the result holds for this $Q$.
For $k\in\N,r\in\frac{1}{2}+\Z_+$, using Lemma \ref{main.11} we have
\begin{eqnarray*}
&d_{-k}(Q{\bf{1}}\otimes y^{n+k})
=(d_{-k}+\al-\be k+\deg(Q)-n-k)Q{\bf{1}}\otimes y^{n}\in W^{(n+\frac{1}{2})},\\
&h_{-r}(Q{\bf{1}}\otimes y^{n+r})
=(h_{-r}+\delta_{n+r-\deg(Q)\in\Z}+\ga \delta_{n+r-\deg(Q)\notin\Z})Q{\bf{1}}\otimes y^{n}\in W^{(n+\frac{1}{2})}.
\end{eqnarray*}
By equations (\ref{eq2}), these indicate that
\begin{equation*}
\begin{aligned}
d_{-k}Q{\bf{1}}\otimes y^{n}&\equiv -(\al-\be k+\deg(Q)-n-k)Q{\bf{1}}\otimes y^{n}  \\
&\equiv -(\al-\be k+\deg(Q)-n-k)\rho_n(Q){\bf{1}}\otimes y^{n}  \\
&\equiv \rho_n(d_{-k}Q){\bf{1}}\otimes y^{n} \pmod {W^{(n+\frac{1}{2})}};\\
h_{-r}Q{\bf{1}}\otimes y^{n}
&\equiv -(\delta_{n+r-\deg(Q)\in\Z}+\ga \delta_{n+r-\deg(Q)\notin\Z})Q{\bf{1}}\otimes y^{n}  \\
&\equiv -(\delta_{n+r-\deg(Q)\in\Z}+\ga \delta_{n+r-\deg(Q)\notin\Z})\rho_n(Q){\bf{1}}\otimes y^{n} \\
&\equiv \rho_n(h_{-r}Q){\bf{1}}\otimes y^{n} \pmod {W^{(n+\frac{1}{2})}}.
\end{aligned}
\end{equation*}
Then the result follows from induction on $\deg(Q)$.

$(2)$ It follows from $(1)$ and Lemma \ref{main.9}.
\end{proof}

\begin{remark}
 For the Verma module $M_{\VV}(c,h)$ over $\VV$,
it is well-known that there exist two homogeneous elements $P_1, P_2\in U(\VV^-)$
such that $U(\VV^-)P_1w_1+U(\VV^-)P_2w_1$ is the unique maximal proper submodule of $M_{\VV}(c,h)$,
where $P_1, P_2$ are allowed to  be zero and $w_1$ is the
highest weight vector in $M_{\VV}(c,h)$.
\end{remark}
\begin{lemma}\label{main.13}
Let $c,h\in\C,l\in\C^*$. There exist   two homogeneous elements $Q_1,Q_2\in\UU(\DD^-)$
such that the unique maximal proper submodule $J(c,h,l)$ of $M(c,h,l)$
is generated by $Q_1{\bf1},Q_2{\bf1}$, where $Q_1, Q_2$ are allowed to  be zero.
\end{lemma}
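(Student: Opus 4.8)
The plan is to transport the whole question to the tensor product $M_{\VV}(c-1,h-\frac{1}{16})^{\DD}\otimes M_{\HH}(l)^{\DD}$ via the isomorphism $\pi_1$ of Lemma \ref{main.3}, where the submodule lattice is completely controlled by Lemma \ref{main.7}, and then to feed in the known two-generator description of the maximal submodule of a Virasoro Verma module recorded in the Remark preceding the statement. Write $V=M_{\VV}(c-1,h-\frac{1}{16})$ and $H=M_{\HH}(l)$. Since $l\neq 0$, the $\HH$-module $H$ is irreducible with nonzero action of $\mathbf{l}$, so Lemma \ref{main.7}(1) applies: every $\DD$-submodule of $V^{\DD}\otimes H^{\DD}$ is of the form $(V')^{\DD}\otimes H^{\DD}$ for a uniquely determined $\VV$-submodule $V'\subseteq V$, and this correspondence is inclusion preserving.

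Because $\pi_1$ is a $\DD$-module isomorphism, it carries the unique maximal proper submodule $J(c,h,l)$ of $M(c,h,l)$ onto the unique maximal proper submodule of $V^{\DD}\otimes H^{\DD}$. Under the correspondence above, the maximal proper $\DD$-submodule corresponds to the maximal proper $\VV$-submodule of $V$, which is $J_{\VV}:=J_{\VV}(c-1,h-\frac{1}{16})$. Hence $\pi_1(J(c,h,l))=J_{\VV}^{\DD}\otimes H^{\DD}$.

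Next I would invoke the Remark to fix homogeneous $P_1,P_2\in U(\VV^-)$ (possibly zero) with $J_{\VV}=U(\VV^-)P_1w_1+U(\VV^-)P_2w_1$, where $w_1$ is the highest weight vector of $V$, and let $\mathbf{1}_{\HH}$ denote the highest weight vector of $H$. The key claim is that the $\DD$-submodule $N$ generated by $(P_1w_1)\otimes\mathbf{1}_{\HH}$ and $(P_2w_1)\otimes\mathbf{1}_{\HH}$ equals $J_{\VV}^{\DD}\otimes H^{\DD}$. By Lemma \ref{main.7}(1) we have $N=(V')^{\DD}\otimes H^{\DD}$ for some $\VV$-submodule $V'$. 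Since $P_iw_1\in J_{\VV}$, the generators lie in $J_{\VV}^{\DD}\otimes H^{\DD}$, giving $V'\subseteq J_{\VV}$. Conversely, viewing $N=V'\otimes H$ as a subspace of $V\otimes H$ and choosing a linear functional $f$ on $H$ with $f(\mathbf{1}_{\HH})=1$, the map $\mathrm{id}\otimes f$ sends the pure tensor $(P_iw_1)\otimes\mathbf{1}_{\HH}\in N$ to $P_iw_1$ while mapping $V'\otimes H$ into $V'$; hence $P_iw_1\in V'$ and therefore $J_{\VV}=U(\VV^-)P_1w_1+U(\VV^-)P_2w_1\subseteq V'$. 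Thus $V'=J_{\VV}$ and $N=J_{\VV}^{\DD}\otimes H^{\DD}=\pi_1(J(c,h,l))$.

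Finally I would pull everything back through $\pi_1$. As $M(c,h,l)=U(\DD^-)\mathbf{1}$ is free of rank one over $U(\DD^-)$ and $\pi_1$ is degree preserving (it intertwines $d_0$), the preimage $\pi_1^{-1}\bigl((P_iw_1)\otimes\mathbf{1}_{\HH}\bigr)$ is a homogeneous vector and can be written as $Q_i\mathbf{1}$ for a homogeneous $Q_i\in U(\DD^-)$, with $Q_i=0$ whenever $P_i=0$. Since an isomorphism commutes with the formation of generated submodules, $J(c,h,l)=\pi_1^{-1}(N)$ is exactly the $\DD$-submodule generated by $Q_1\mathbf{1},Q_2\mathbf{1}$, which is the assertion. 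The only genuinely delicate points are the reduction through Lemma \ref{main.7}, which rests on $l\neq 0$ so that $H$ is $\HH$-irreducible, together with the pure-tensor argument identifying $V'$ with $J_{\VV}$; the passage back to $U(\DD^-)$ is then routine.
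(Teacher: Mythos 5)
Your proposal is correct and follows essentially the same route as the paper: transport via the isomorphism $\pi_1$ of Lemma \ref{main.3}, use the Remark's generators $P_1w_1,P_2w_1$ of $J_{\VV}(c-1,h-\frac{1}{16})$ together with Lemma \ref{main.7} to identify the maximal submodule of the tensor product as the $\DD$-submodule generated by $P_1w_1\otimes w_2$ and $P_2w_1\otimes w_2$, then pull back to homogeneous $Q_1\mathbf{1},Q_2\mathbf{1}$. The only difference is that you explicitly verify (via the lattice correspondence and the functional $\mathrm{id}\otimes f$) the generation claim that the paper simply asserts, which is a sound filling-in of detail rather than a different argument.
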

\begin{proof} We consider the $\DD$-module isomorphism
in Lemma \ref{main.3}
$$\pi_1:M(c,h,l)\rightarrow M_{{\VV}}(c-1,h-\frac{1}{16})^{\DD}\otimes M_{\HH}(l)^{\DD}.$$
We know that the unique maximal proper submodule $J_\VV(c,h)$ of $M_{{\VV}}(c-1,h-\frac{1}{16})$
is generated by two singular weight vectors $P_1w_1,P_2w_1$ from the above remark,
where $w_1$ is the highest weight vector of $M_{\VV}(c-1,h-\frac{1}{16})$
and $P_1, P_2\in U(\VV^-)$.
Then using Lemma \ref{main.7} we know that the unique maximal proper submodule of $M_{{\VV}}(c-1,h-\frac{1}{16})^{\DD}\otimes M_{\HH}(l)^{\DD}$ is $J_\VV(c,h)\otimes M_{\HH}(l)^{\DD}$ which is
generated by $P_1w_1\otimes w_2,P_2w_1\otimes w_2$ as a $\DD$-submodule,
where $w_2$ is the highest weight vector of $M_{\HH}(l)$. Then
$J(c,h,l)=\pi^{-1}(J_\VV(c,h)\otimes M_{\HH}(l)^{\DD})$ which is
generated by $\pi_1^{-1}(P_1w_1\otimes w_2), \pi_1^{-1}(P_2w_1\otimes w_2)$ as a $\DD$-submodule.
Thus  there exist two homogeneous elements $Q_1,Q_2\in\UU(\DD^-)$
such that $Q_1{\bf1}=\pi_1^{-1}(P_1w_1\otimes w_2),Q_2{\bf1}=\pi_1^{-1}(P_2w_1\otimes w_2)$.
So, the $J(c,h,l)$ is generated by $Q_1{\bf1},Q_2{\bf1}$.
\end{proof}

\begin{proposition}\label{main.14}
Suppose that $c,h,\al,\be\in\C,l, \ga\in\C^*$. Then $L(c,h,l)\otimes A(\al,\be,\ga)$
is irreducible if and only if
$(\rho_n(Q_1),\rho_n(Q_2))\ne (0,0)$
for all $n\in \frac{\Z}{2}$,
where $Q_1,Q_2\in\UU(\DD^-)$ are   in Lemma \ref{main.13}
\end{proposition}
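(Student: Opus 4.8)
The plan is to work throughout in the shifted realization $L(c,h,l)\otimes\C[y^{\pm\frac12}]$, in which the weight of $P\mathbf 1\otimes y^i$ is $h+\al-i$, so that the weight spaces are exactly the layers $L(c,h,l)\otimes y^i$ and the whole module is graded by the $y$-exponent. I would first reduce irreducibility to a descent statement. Let $\bar W^{(k)}$ denote the image of $W^{(k)}$ in $L(c,h,l)\otimes\C[y^{\pm\frac12}]$; these are $\DD$-submodules forming a decreasing chain, each generated by the vectors $\bar{\mathbf 1}\otimes y^j$ with $j\ge k$, and satisfying $\bar W^{(k)}\supseteq\oplus_{i\ge k}L(c,h,l)\otimes y^i$ by Lemma \ref{main.11}(2), so that $\bigcup_k\bar W^{(k)}$ is the whole module. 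Given a nonzero submodule $N$, Lemma \ref{main.10}(b) provides a tail $\bar{\mathbf 1}\otimes y^i\in N$ for all $i\ge k_0$. If $\bar{\mathbf 1}\otimes y^i\in N$ for all $i\ge m$, then all generators of $\bar W^{(m)}$ lie in $N$, whence $\bar W^{(m)}\subseteq N$; consequently $N$ contains $\bar{\mathbf 1}\otimes y^{m-\frac12}$ precisely when $\bar{\mathbf 1}\otimes y^{m-\frac12}\in\bar W^{(m)}$, and this is the only obstruction to pushing the tail down by $\frac12$.

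The crux is therefore to identify, for each $n\in\frac{\Z}{2}$, the condition $\bar{\mathbf 1}\otimes y^{n}\in\bar W^{(n+\frac12)}$. Writing $q$ for the quotient map $M(c,h,l)\otimes\C[y^{\pm\frac12}]\to L(c,h,l)\otimes\C[y^{\pm\frac12}]$, whose kernel is $J(c,h,l)\otimes\C[y^{\pm\frac12}]$, this amounts to $\mathbf 1\otimes y^n\in W^{(n+\frac12)}+J(c,h,l)\otimes\C[y^{\pm\frac12}]$, and since everything is graded by the $y$-exponent I may restrict to the layer $n$, i.e. to $W^{(n+\frac12)}_n+J(c,h,l)\otimes y^n$. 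By Lemma \ref{main.11}(3) the layer $M(c,h,l)\otimes y^n$ decomposes as $W^{(n+\frac12)}_n\oplus\C(\mathbf 1\otimes y^n)$, so $\mathbf 1\otimes y^n$ lies in $W^{(n+\frac12)}_n+J(c,h,l)\otimes y^n$ if and only if the image of $J(c,h,l)\otimes y^n$ in the one-dimensional quotient $M(c,h,l)\otimes y^n/W^{(n+\frac12)}_n$ is nonzero. Using that the generators $Q_1\mathbf 1,Q_2\mathbf 1$ of Lemma \ref{main.13} are singular vectors, every element of $J(c,h,l)$ has the form $(P'Q_1+P''Q_2)\mathbf 1$ with $P',P''\in\UU(\DD^-)$, and Lemma \ref{main.12}(1) identifies the image of such an element with $\rho_n(P'Q_1)+\rho_n(P''Q_2)$.

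It then remains to observe the multiplicativity of $\rho_n$: peeling generators off from the left in the inductive rules (\ref{eq2}) shows $\rho_n(PQ)=s\,\rho_n(Q)$ for a scalar $s=s(P,\deg Q)$ whenever $Q$ is homogeneous. Hence each $\rho_n(P'Q_i)$ is a scalar multiple of $\rho_n(Q_i)$, so the image of $J(c,h,l)\otimes y^n$ is nonzero iff $\rho_n(Q_1)\ne0$ or $\rho_n(Q_2)\ne0$; taking $P'=1$ or $P''=1$ shows this criterion is sharp. This yields the equivalence $\bar{\mathbf 1}\otimes y^n\in\bar W^{(n+\frac12)}\iff(\rho_n(Q_1),\rho_n(Q_2))\ne(0,0)$. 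Combined with the descent argument it gives irreducibility when the condition holds for every $n$; and when it fails at some $n_0$ the submodule $\bar W^{(n_0+\frac12)}$ is nonzero (it contains $\bar{\mathbf 1}\otimes y^{n_0+\frac12}$) yet proper (it omits $\bar{\mathbf 1}\otimes y^{n_0}$), so the module is reducible. The main obstacle I anticipate lies in the second paragraph: verifying that $\ker q$ really is $J(c,h,l)\otimes\C[y^{\pm\frac12}]$ in the shifted picture and that the generators may be taken to be singular, so that $\rho_n$ genuinely applies to every element of $J(c,h,l)$; once this compatibility is settled, the $\rho_n$-computation is routine.
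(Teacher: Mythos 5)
Your proposal is correct and follows essentially the same route as the paper's proof: the shifted realization $L(c,h,l)\otimes\C[y^{\pm\frac12}]$, the tail-descent argument via Lemma \ref{main.10} together with the decomposition of Lemma \ref{main.11}(3), and the identification of the one-dimensional obstruction at each layer through $\rho_n$ via Lemma \ref{main.12}. The only cosmetic difference is that where the paper invokes Lemma \ref{main.11}(4) to pass from $Q_i{\bf 1}\otimes y^n\in W^{(n+\frac12)}_n$ to $(\UU(\DD^-)Q_i{\bf 1})\otimes y^n\subseteq W^{(n+\frac12)}_n$, you instead establish the multiplicativity $\rho_n(PQ)\in\C\,\rho_n(Q)$ directly from the inductive rules (\ref{eq2}); the two justifications are equivalent in content.
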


\begin{proof}
Let $J=U(\DD^-)Q_1{\bf{1}}+U(\DD^-)Q_2{\bf{1}}$ be the unique maximal proper submodule of $M(c,h,l)$,
$\pi$ be the canonical map from $M(c,h,l)\otimes A(\al,\be,\ga)$ to $L(c,h,l)\otimes A(\al,\be,\ga)$.
First, we prove the following statement.
\begin{claim}\label{cla.1}
$L(c,h,l)\otimes A(\al,\be,\ga)$ is irreducible if and only if
$M(c,h,l)\otimes y^n=W^{(n+\frac{1}{2})}_n+J\otimes y^n$ for all $n\in\frac{\Z}{2}.$
\end{claim}
($\Rightarrow$) Since $W^{(n+\frac{1}{2})},J\otimes A(\al,\be,\ga)$ are submodules of $M(c,h,l)\otimes A(\al,\be,\ga)$, and $W^{(n+\frac{1}{2})}\nsubseteq J\otimes A(\al,\be,\ga)$,
 we see that
$$J\otimes A(\al,\be,\ga)\subsetneq W^{(n+\frac{1}{2})}+J\otimes A(\al,\be,\ga)\subseteq M(c,h,l)\otimes A(\al,\be,\ga).$$
But $L(c,h,l)\otimes A(\al,\be,\ga)$ is irreducible,
so $W^{(n+\frac{1}{2})}+J\otimes A(\al,\be,\ga)= M(c,h,l)\otimes A(\al,\be,\ga).$
Thus we have $M(c,h,l)\otimes y^n=W^{(n+\frac{1}{2})}_n+J\otimes y^n,\forall n\in\frac{\Z}{2}$.

($\Leftarrow$) Assume that $V_1$ is a nonzero submodule of $L(c,h,l)\otimes A(\al,\be,\ga)$,
then there exists $k\in\frac{\Z}{2}$
such that $\bar{\bf{1}}\otimes y^i\in V_1, \forall k\leq i\in\frac{\Z}{2}$  by Lemma \ref{main.10}.
It shows that $\pi({W^{(k)}+J\otimes A(\al,\be,\ga)})\subseteq V_1$. From Lemma \ref{main.11} (3) we know that
$$W^{(k)}_{k-\frac{1}{2}}\oplus \C({\bf{1}}\otimes y^{k-\frac{1}{2}})=M(c,h,l)\otimes y^{k-\frac{1}{2}}=W^{(k)}_{k-\frac{1}{2}}+J\otimes y^{k-\frac{1}{2}}\subseteq \pi^{-1}(V_1).$$
So $\bar{\bf{1}}\otimes y^{k-\frac{1}{2}}\in V_1$.
Inductively, $\bar{\bf{1}}\otimes y^{i}\in V_1$ for all $i\in\frac{\Z}{2}$.
Then it is easy to see that $V_1=L(c,h,l)\otimes A(\al,\be,\ga)$.
Therefore, $L(c,h,l)\otimes A(\al,\be,\ga)$ is irreducible.
Claim \ref{cla.1} is proved.

Using Claim 1 and   Lemmas \ref{main.9}, \ref{main.11}, \ref{main.12} we see that
\begin{equation*}
\begin{aligned} &L(c,h,l)\otimes A(\al,\be,\ga) \text{ is irreducible }\\
\iff &W^{(n+\frac{1}{2})}_n\oplus \C ({\bf 1}\otimes y^n)=M(c,h,l)\otimes y^n=W^{(n+\frac{1}{2})}_n+J\otimes y^n, \forall  n\in\frac{\Z}{2}\\
\iff &
(U(\DD^-)Q_1{\bf{1}}+U(\DD^-)Q_2{\bf{1}})\otimes y^n
\nsubseteq W^{(n+\frac{1}{2})}_n, \forall  n\in\frac{\Z}{2}  \\
\iff & \{Q_1{\bf{1}}\otimes y^n, Q_2{\bf{1}}\otimes y^n\}\nsubseteq W^{(n+\frac{1}{2})}_n , \forall  n\in\frac{\Z}{2} \\
\iff & \{\rho_n(Q_1),\rho_n(Q_2)\}\neq\{0,0\}, \forall  n\in\frac{\Z}{2}.
\end{aligned}
\end{equation*}
Now, we complete the proof of the theorem.
\end{proof}
Now we combine Proposition \ref{main.8} and Proposition \ref{main.14} into the following main theorem.
\begin{theorem}\label{main.15}
Let  $c,h,l,\al,\be\in\C,\ga\in\C^*$.
\begin{enumerate}[$(1)$]
  \item If $l=0$, then $L(c,h,l)\otimes A'(\al,\be)^{\DD}$ is an irreducible $\DD$-module
  if and only if $L_{\VV}(c,h)\otimes A'(\al,\be)$ is an irreducible $\VV$-module.
  \item If $l\ne0$, then $L(c,h,l)\otimes A'(\al,\be)^{\DD}$ is an irreducible $\DD$-module
  if and only if $L_{\VV}(c-1,h-\frac{1}{16})\otimes A'(\al,\be)$ is an irreducible $\VV$-module.
  \item If $l=0$, then $L(c,h,l)\otimes A(\al,\be,\ga)$ is irreducible.
  \item If $l\ne 0$, then $L(c,h,l)\otimes A(\al,\be,\ga)$ is irreducible if and only if
$(\rho_n(Q_1),\rho_n(Q_2))\ne (0,0)$ for all $n\in \frac{\Z}{2}$ where $Q_1,Q_2\in\UU(\DD^-)$ are   in Lemma \ref{main.13}.
\end{enumerate}
\end{theorem}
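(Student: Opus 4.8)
The plan is to obtain the theorem as a direct assembly of the two preceding propositions, treating it as the bookkeeping step that sorts the tensor products $L(c,h,l)\otimes V$ into four cases according to the value of $l$ and the type of the irreducible module of intermediate series $V$. By Lemma \ref{main.2}, such a $V$ is isomorphic either to $A'(\al,\be)^{\DD}$ or to $A(\al,\be,\ga)$, so together with the dichotomy $l=0$ versus $l\ne 0$ these four cases are exhaustive, and each has already been settled.

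First I would dispose of the two statements involving $A'(\al,\be)^{\DD}$. Statement (1), the case $l=0$, is exactly Proposition \ref{main.8}(2), and statement (2), the case $l\ne 0$, is exactly Proposition \ref{main.8}(1); in both, the reduction of $\DD$-module irreducibility to irreducibility of a Virasoro tensor product was already carried out there via the isomorphisms of Lemma \ref{main.3}, so nothing further is required. Turning to the statements involving $A(\al,\be,\ga)$, statement (3) (the case $l=0$) is Proposition \ref{main.8}(3), while statement (4) (the case $l\ne 0$, which is the genuinely hard one) is precisely Proposition \ref{main.14}, whose criterion encodes the nonvanishing of $(\rho_n(Q_1),\rho_n(Q_2))$ for all $n\in\frac{\Z}{2}$, where $Q_1,Q_2\in\UU(\DD^-)$ are the generators of $J(c,h,l)$ furnished by Lemma \ref{main.13}.

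Consequently the proof reduces to checking that the parameter labels and the case distinctions line up consistently across the four statements, which they do. I expect no genuine obstacle here: all the substantive work lives in Propositions \ref{main.8} and \ref{main.14}, and the theorem merely collects their conclusions into a single statement organized by whether $l$ vanishes.
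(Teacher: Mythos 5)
Your proposal is correct and matches the paper exactly: the paper proves Theorem \ref{main.15} by simply combining Proposition \ref{main.8} (parts (2), (1), (3) giving statements (1), (2), (3) respectively) with Proposition \ref{main.14} (giving statement (4)), which is precisely your assembly. The only substantive work does indeed live in those two propositions, so nothing further is needed.
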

\begin{remark}\label{rem1} {\rm
Suppose $l\ne 0$ and the unique maximal submodule of $M(c,h,l)$ is generated by $Q_1{\bf1},Q_2{\bf1}$ for some $Q_1,Q_2\in\UU(\DD^-)$ as in Lemma \ref{main.13}.
For $k\in\frac{\Z}{2},$ the image of $W^{(k)}$ under the canonical map $\pi: M(c,h,l)\otimes A(\al,\be,\ga)\to L(c,h,l)\otimes A(\al,\be,\ga)$ is denoted by $\overline{W^{(k)}}$.
Then we have a sequence of submodules of $L(c,h,l)\otimes A(\al,\be,\ga):$
\begin{equation}\label{eq3}
\cdots\subseteq \overline{W^{(k+\frac{1}{2})}}\subseteq \overline{W^{(k)}}\subseteq\overline{W^{(k-\frac{1}{2})}}\subseteq\cdots
\end{equation}
From the proof of Proposition \ref{main.14}, the inclusion $\overline{W^{(k+\frac{1}{2})}}\subseteq \overline{W^{(k)}}$ is proper
if and only if \\
$(\rho_k(Q_1),\rho_k(Q_2))=(0,0)$.
Denote by $\Lambda$ the set of all $n\in\frac{\Z}{2}$ such that $(\rho_n(Q_1),\rho_n(Q_2))=(0,0)$.

If $(Q_1,Q_2)=(0,0)$, then $\Lambda=\frac{\Z}{2}$
and all inclusions $\overline{W^{(k+\frac{1}{2})}}\subseteq \overline{W^{(k)}}$
are proper for $k\in\frac{\Z}{2}$.
Moreover, $\overline{W^{(k)}}/\overline{W^{(k+\frac{1}{2})}}$ is a highest weight module
with highest weight $\al+h-k$ for $k\in\Lambda$.

If $(Q_1,Q_2)\ne(0,0)$, then $\Lambda$ is a finite subset of $\frac{\Z}{2}$.
In sequence \ref{eq3},
the inclusion $\overline{W^{(k+\frac{1}{2})}}\subseteq \overline{W^{(k)}}$ is proper
if and only if $k\in\Lambda$. Without lose of generality,
we can assume that $\Lambda=\{k_1, k_2, \cdots, k_t\}$ with $k_1<k_2<\cdots<k_t$,
then sequence \ref{eq3} can be simplified as following:
$$0\subsetneq\overline{W^{(k_{t+1})}}\subsetneq\overline{W^{(k_t)}}\subsetneq\cdots\subsetneq \overline{W^{(k_1)}}=L(c,h,l)\otimes A(\al,\be,\ga)$$
for any $k_{t+1}>k_t$.
It shows that $\overline{W^{(k_{i+1})}}=\overline{W^{(k_i+\frac{1}{2})}}$ for $1\leq i\leq t$.
However, we know that $W^{(k_i)}$ is generated by $W^{(k_i+\frac{1}{2})}$ and ${\bf1}\otimes y^{k_i}$,
thus $\overline{W^{(k_i)}}$ is generated by $\overline{W^{(k_{i+1})}}$ and $\overline{{\bf1}\otimes y^{k_i}}$.
This implies that $\overline{W^{(k_i)}}/\overline{W^{(k_{i+1})}}$ is a highest weight module
with highest weight $\al+h-k_i$.
In particular, we obtain that $\overline{W^{(k_{t+1})}}=\overline{W^{(k_t+\frac{1}{2})}}$
is an irreducible weight $\DD$-module.}
\end{remark}

\section{Isomorphism theorem}
In the section, we determine the necessary and sufficient conditions for   two of
the tensor product modules studied in Section 3 to be isomorphic.
\begin{lemma}\label{main.15'}
Let $\al,\be\in\C,\ga\in\C^*$. Then
$A(\al,\be,\ga)\cong A(\al+n,\be,\ga)$ for any $n\in\frac{\Z}{2}$.
\end{lemma}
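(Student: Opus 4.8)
The plan is to exhibit an explicit linear isomorphism. Write $A(\al,\be,\ga)=\oplus_{k\in\frac{1}{2}\Z}\C v_k$ and $A(\al+n,\be,\ga)=\oplus_{k\in\frac{1}{2}\Z}\C v_k'$ with the actions given in (\ref{A(a,b,g)}). Since $d_0v_k=(\al-k)v_k$ while $d_0v_k'=(\al+n-k)v_k'$, any $\DD$-module map must send the one-dimensional weight space $\C v_k$ to the weight space of the same $d_0$-eigenvalue in $A(\al+n,\be,\ga)$, namely $\C v_{k+n}'$. This forces the ansatz $\phi(v_k)=c_kv_{k+n}'$ for scalars $c_k\in\C^*$ still to be determined, and such a $\phi$ is automatically bijective.

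First I would check the $d_m$-relations. Comparing $\phi(d_mv_k)$ with $d_m\phi(v_k)$ gives $(\al+\be m-k)c_{m+k}=(\al+\be m-k)c_k$, so $c_{m+k}=c_k$ whenever $\al+\be m-k\ne0$; as $m$ ranges over $\Z$ this says $c_k$ depends only on the coset of $k$ in $\frac{1}{2}\Z/\Z$, i.e. only on whether $k\in\Z$ or $k\in\frac{1}{2}+\Z$. Write $c_k=a$ for $k\in\Z$ and $c_k=b$ for $k\in\frac{1}{2}+\Z$. Note also that ${\bf c}$ and ${\bf l}$ act as zero on both modules, so those relations are trivially intertwined.

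Then I would pin down $a,b$ from the $h_r$-relations, comparing $\phi(h_rv_k)$ with $h_r\phi(v_k)$ on each coset. When $n\in\Z$ the index shift $k\mapsto k+n$ preserves the coset of $k$, and the comparison on both cosets forces $a=b$; taking $a=b=1$ works, so $\phi(v_k)=v_{k+n}'$ is an isomorphism. When $n\in\frac{1}{2}+\Z$ the shift swaps the two cosets, and since adding the half-integer $r$ also swaps cosets, the two different structure scalars $1$ (acting on integer-indexed vectors) and $\ga$ (acting on half-integer-indexed vectors) get interchanged; the same comparison now forces $b=\ga a$, and taking $a=1,\ b=\ga$ works. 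In either case the $d_m$-identity remains satisfied because $c_{m+k}=c_k$ within each coset, so $\phi$ is the desired $\DD$-isomorphism for every $n\in\frac{1}{2}\Z$.

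The main obstacle is precisely the bookkeeping forced by the $\frac{1}{2}\Z$-grading: a half-integer shift of $\al$ interchanges the integer-indexed and half-integer-indexed vectors, on which $h_r$ acts by the distinct scalars $1$ and $\ga$. Inserting the correction factor $\ga$ into the scalars $c_k$ on half-integer indices is exactly what repairs this mismatch. Once this is recognized, every verification is a direct substitution into (\ref{A(a,b,g)}), and no degenerate cases arise since the $d_m$-identity holds automatically whenever its scalar coefficient $\al+\be m-k$ vanishes.
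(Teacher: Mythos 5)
Your proposal is correct and matches the paper's proof: the scalars you arrive at, $c_k=1$ for all $k$ when $n\in\Z$ and $c_k=\delta_{k\in\Z}+\ga\delta_{k\notin\Z}$ when $n\in\frac{1}{2}+\Z$, are exactly the coefficients in the paper's map $F(v_k)=(\delta_{n\in\Z}+\delta_{n\notin\Z}(\delta_{k\in\Z}+\ga\delta_{k\notin\Z}))v_{k+n}$. The only difference is presentational: the paper states this isomorphism and calls the verification straightforward, while you derive it from the weight-space constraint and check the $d_m$- and $h_r$-relations explicitly.
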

\begin{proof}
For any $n\in \frac{\Z}{2}$, it is straightforward  to verify that  the linear map
$$F: A(\al,\be,\ga)\rightarrow A(\al+n,\be,\ga), \quad v_k\rightarrow (\delta_{n\in\Z}+\delta_{n\notin\Z}(\delta_{k\in\Z}+\ga\delta_{k\notin\Z}))v_{k+n}$$
 is an isomorphism of $\DD$-modules.
\end{proof}

\begin{proposition}\label{main.16}
Let $c,h,l,\al,\be,c_1,h_1,l_1,\al_1,\be_1,\in\C,\ga,\ga_1\in\C^*$.
Then $L(c,h,l)\otimes A(\al,\be,\ga)\cong L(c_1,h_1,l_1)\otimes A(\al_1,\be_1,\ga_1)$
if and only if $ ( \be,\ga, c,h,l)= (\be_1,\ga_1,c_1,h_1,l_1)$ and $\alpha-\alpha_1\in\frac{\Z}2$.
\end{proposition}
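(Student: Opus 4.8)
The plan is to prove the nontrivial direction (necessity), since sufficiency follows from Lemma \ref{main.15'} together with the obvious observation that identical data yield an isomorphism. So I assume there is a $\DD$-module isomorphism
$$\varphi:\ L(c,h,l)\otimes A(\al,\be,\ga)\ \lra\ L(c_1,h_1,l_1)\otimes A(\al_1,\be_1,\ga_1),$$
and I want to extract the equalities $(\be,\ga,c,h,l)=(\be_1,\ga_1,c_1,h_1,l_1)$ and $\al-\al_1\in\frac12\Z$. First I would dispose of the central characters: both ${\bf c}$ and ${\bf l}$ act as scalars (namely $c,l$ on the left and $c_1,l_1$ on the right, using that they act trivially on $A(\al,\be,\ga)$ and by $c,l$ on $L(c,h,l)$), so the existence of $\varphi$ forces $c=c_1$ and $l=l_1$ immediately. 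This already splits the argument into the two cases $l=0$ and $l\ne0$.

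The main structural step is to recover the highest-weight data and the parameters of the intermediate-series factor from the \emph{weight-space asymptotics} of the tensor product. Using the shifting realization from Section 3, I would view both modules as $L(\cdot)\otimes\C[y^{\pm\frac12}]$, graded by $d_0$, and compare the two $d_0$-spectra. The set of weights is $\{\,\al+h-k : k\in\frac12\Z,\ \text{plus contributions from } U(\DD^-)\,\}$ on the left and the analogous set on the right; since the spectrum is a union of cosets of $\frac12\Z$ shifted by $\al+h$ (resp.\ $\al_1+h_1$), matching the top-of-spectrum behaviour gives $\al+h\equiv\al_1+h_1\pmod{\frac12\Z}$. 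To pin down $h$ (and hence $\al$ modulo $\frac12\Z$) separately from $\al$, I would use that $L(c,h,l)$ carries a genuine highest weight vector $\bar{\bf1}$ annihilated by $\DD^+$: the element $\bar{\bf1}\otimes v_k$ is, up to the $W^{(k)}$-filtration of Remark \ref{rem1}, a generator of a highest weight subquotient of highest weight $\al+h-k$. Comparing the weights at which such highest-weight subquotients occur on the two sides identifies $h=h_1$ (forced once $c=c_1,l=l_1$ by the structure of $L_\VV(c-1,h-\tfrac1{16})$ or $L_\VV(c,h)$), and then $\al-\al_1\in\frac12\Z$.

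It remains to recover $\be$ and $\ga$, and this is where I expect the real obstacle to lie, since $\be$ and $\ga$ do not appear in any central or weight-support invariant: they are encoded in the precise \emph{action} of $\DD$, not in the grading. The strategy is to use the $\HH$-action, which is insensitive to the Virasoro factor. Fixing a highest weight vector and restricting attention to the $\HH$-submodule generated by $\bar{\bf1}\otimes v_k$, the scalar $\ga$ governs the ``half-integer-shifted'' $h_r$-action (via the $\ga\,\delta_{i-\deg(P)\notin\Z}$ term in \eqref{eq1}), so a direct comparison of how $h_{1/2}h_{-1/2}$ acts on matched weight vectors isolates $\ga=\ga_1$; since $l=l_1$ is already known, this comparison is consistent. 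For $\be$, I would compute the eigenvalue of a well-chosen degree-zero operator such as $h_{1/2}h_{-m-1/2}d_m$ (compare observation (b) in the proof of Proposition \ref{main.8}, which yields $\ga(\al-k+m\be)$): applying such operators to $\varphi(\bar{\bf1}\otimes v_k)$ and to $\bar{\bf1}\otimes v_k$ and equating the resulting scalars produces a linear relation in $m$ whose slope is $\be$ and forces $\be=\be_1$. The delicate point throughout is that $\varphi$ need not send $\bar{\bf1}\otimes v_k$ to a pure tensor, so I would first apply Lemma \ref{main.5} (exactly as in the proof of Lemma \ref{main.7}(2)) to replace $\varphi(\bar{\bf1}\otimes v_k)$ by a manageable single-tensor representative before extracting the scalars; handling this reduction cleanly, while keeping track of the shift $\al-\al_1$, is the crux of the argument.
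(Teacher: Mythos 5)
Your overall frame agrees with the paper's proof (sufficiency from Lemma \ref{main.15'}, the central elements giving $c=c_1$ and $l=l_1$, and passing to the shifted realization $L(\cdot)\otimes\C[y^{\pm\frac12}]$), but the two steps that carry the real content both have genuine gaps. The first is your mechanism for separating $h$ from $\al$. Comparing weight supports only yields $\al+h\equiv\al_1+h_1\pmod{\frac{\Z}{2}}$, and you propose to break this degeneracy by matching the weights at which the highest-weight subquotients $\overline{W^{(k)}}/\overline{W^{(k+\frac12)}}$ of Remark \ref{rem1} occur. But this fails precisely when the tensor products are irreducible --- the generic case, and the one the proposition is chiefly about --- since then $\Lambda=\emptyset$, every inclusion $\overline{W^{(k+\frac12)}}\subseteq\overline{W^{(k)}}$ is an equality, and there are no highest-weight subquotients to compare. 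Even when $\Lambda\neq\emptyset$, each such subquotient has highest weight $\al+h-k$, so the invariant again sees only the combination $\al+h$; and your parenthetical claim that $h=h_1$ is ``forced once $c=c_1,l=l_1$ by the structure of $L_{\VV}$'' is unsupported, because $c$ and $l$ do not determine $h$.

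The second gap is the reduction of $\varphi(\overline{\bf1}\otimes v_k)$ to a pure tensor via Lemma \ref{main.5} ``exactly as in Lemma \ref{main.7}(2)'': that argument requires $\HH$ to annihilate one tensor factor, so that $\UU(\HH)$ moves only the other factor; here $l\neq0$, so $\HH$ acts nontrivially on both $L(c,h,l)$ and $A(\al,\be,\ga)$, and the reduction does not go through. The paper resolves both problems with a single computation that needs no pure-tensor reduction: in the shifted picture each weight space is exactly $L(c_1,h_1,l_1)\otimes y^t$, so automatically $\varphi({\overline{\bf1}}\otimes y^k)=\sum_{i=0}^{s}P_i{\overline{\bf1'}}\otimes y^t$ for a \emph{single} $t$ with $P_i\in U(\DD^-)_{-i}$ homogeneous. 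Applying the positive-degree operators $h_{\frac12+m+n}$ and $d_mh_{\frac12+n}$ with $m,n>s$ --- which act through the explicit scalars of (\ref{eq1}), their genuine operator parts killing ${\overline{\bf1'}}$ for degree reasons --- and comparing the two results gives $\al+\be m-k-n-\tfrac12=\al_1+\be_1 m-i-t-n-\tfrac12$ for every $i$ with $P_i{\overline{\bf1'}}\neq0$; hence $\be=\be_1$ and $\al-k=\al_1-i-t$, which combined with the weight equation $\al+h-k=\al_1+h_1-t$ forces $i=h-h_1\ge 0$, and running the same argument for $\varphi^{-1}$ yields $h=h_1$, whence $s=0$, $t=k$ and $\al-\al_1\in\frac{\Z}{2}$. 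Only after this does one know $\varphi({\overline{\bf1}}\otimes y^k)=a_k{\overline{\bf1'}}\otimes y^k$, and then $\ga=\ga_1$ follows from the $h_{\frac12+m}h_{\frac12+n}$ computation you anticipated. Note finally that your proposed degree-zero operator $h_{\frac12}h_{-m-\frac12}d_m$ contains the negative-degree factor $h_{-m-\frac12}$, which creates extra terms on the $L$-factor instead of a clean scalar; choosing operators of purely positive degree exceeding $s$ is exactly what makes the scalars come out and the argument close.
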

\begin{proof}
From  Lemma \ref{main.15'}, we may assume that $0\leq\mathfrak{Re}\al,\mathfrak{Re}\al_1<\frac{1}{2}$.
Since the  “if part” is trivial,
we only need to show the “only if part”.

We identify the $\DD$-modules $L(c,h,l)\otimes A(\al,\be,\ga)$ with $V_1=L(c,h,l)\otimes \C[y^{\pm\frac{1}{2}}]$,   and $L(c_1,h_1,l_1)\otimes A(\al_1,\be_1,\ga_1)$ with $V_2=L(c_1,h_1,l_1)\otimes\C[y^{\pm\frac{1}{2}}]$.
Let ${\overline{\bf1}}$ and ${\overline{\bf1'}}$ be the highest weight vectors of $L(c,h,l)$ and $L(c_1,h_1,l_1)$ respectively.
It is clear that $$c=c_1,l=l_1.$$

Suppose $\varphi:  V_1\to V_2$
is a $\DD$-module isomorphism.
For  $k\in\frac{\Z}{2}$, let $\varphi({\overline{\bf1}} \otimes y^k)=\sum_{i=0}^sP_i{\overline{\bf1'}}\otimes y^t$
for some $t\in\frac{\Z}{2}$ where $P_i\in U(\DD^-)_{-i}$ is homogeneous.
Note that  $\varphi({\overline{\bf1}}\otimes y^k)$ and ${\overline{\bf1}}\otimes y^k$
have the same weight. We deduce that
\begin{equation}\label{alpha-h}\al+h-k=\al_1+h_1-t.\end{equation}
For any $m,n>s$, we compute
\begin{equation}\label{eq4.3}
\begin{split}
 (\delta_{k\in\Z}&+\ga\delta_{k\notin\Z})\varphi({\overline{\bf1}}\otimes y^{k+m+n+\frac{1}{2}})=\varphi(h_{\frac{1}{2}+m+n}({\overline{\bf1}}\otimes y^k))
\\
&=
h_{\frac{1}{2}+m+n}\varphi({\overline{\bf1}}\otimes y^k)
=\sum_{i=0}^{s}(\delta_{t+i\in\Z}+\ga_1\delta_{t+i\notin\Z})P_i{\overline{\bf1'}}\otimes y^{t+m+n+\frac{1}{2}},
\end{split}
\end{equation}
\begin{equation}\label{eq4.1}
\begin{split}
 &(\delta_{k\in\Z}+\ga\delta_{k\notin\Z})(\al+\be m-k-n-\frac{1}{2})\varphi({\overline{\bf1}}\otimes y^{k+m+n+\frac{1}{2}})\\
=&\varphi(d_mh_{\frac{1}{2}+n}({\overline{\bf1}}\otimes y^k))=d_mh_{\frac{1}{2}+n}\varphi({\overline{\bf1}}\otimes y^k)
=\sum_{i=0}^{s}d_mh_{\frac{1}{2}+n}(P_i{\overline{\bf1'}}\otimes y^t)\\
=&\sum_{i=0}^{s}(\delta_{t+i\in\Z}+\ga_1\delta_{t+i\notin\Z})
(\al_1+\be_1 m-i-t-n-\frac{1}{2})P_i{\overline{\bf1'}}\otimes y^{t+m+n+\frac{1}{2}},
\end{split}
\end{equation}
Comparing the above two equations we obtain that
$$\al+\be m-k-n-\frac{1}{2}=\al_1+\be_1 m-i-t-n-\frac{1}{2}, \forall m,n>s,0\leq i\leq s$$ with $P_i{\bf 1}'\ne 0$.
Combining with (\ref{alpha-h}) we deduce that
$$\be=\be_1,\al=\al_1,i= h-h_1, t=k-h+h_1.$$
Since $i=-\deg(P_i)\ge0$, so $h- h_1\ge0$. By considering $\varphi^{-1}$ in the above arguments we deduce that $h_1- h\ge 0$.
 We must have
$$h=h_1 , t=k, s=0.$$
Therefore, for any $k\in\frac{\Z}{2}$, we may assume that
 $\varphi({\overline{\bf1}}\otimes y^k)=a_k{\overline{\bf1'}}\otimes y^k$ where $a_k\in\C^*$.
From
\begin{equation*}
\begin{split}
&\ga a_{k+m+n+1}{\overline{\bf1'}}\otimes y^{k+m+n+1}
=\ga\varphi({\overline{\bf1}}\otimes y^{k+m+n+1})=\varphi(h_{\frac{1}{2}+m}h_{\frac{1}{2}+n}({\overline{\bf1}}\otimes y^k))\\
=&h_{\frac{1}{2}+m}h_{\frac{1}{2}+n}\varphi({\overline{\bf1}}\otimes y^k)
=h_{\frac{1}{2}+m}h_{\frac{1}{2}+n}(a_k{\overline{\bf1'}}\otimes y^{k})
=a_k\ga_1{\overline{\bf1'}}\otimes y^{k+m+n+1}, \forall m,n\in\Z_+,
\end{split}
\end{equation*}
We deduce that
$$\ga a_{k+m+n+1}=a_k\ga_1, \forall m,n\in\Z_+,$$
yielding that $\gamma=\gamma_1$. We complete the proof.
\end{proof}

\begin{remark}
In Proposition \ref{main.16} we can have similar results for any highest weight modules, not only for irreducible highest weight modules.
\end{remark}
\begin{theorem}\label{main.17}
Let $c,h,l,\al,\be,c_1,h_1,l_1,\al_1,\be_1,\in\C,\ga,\ga_1\in\C^*$.
Then
\begin{enumerate}[$(1)$]
    \item $L(c,h,l)\otimes A'(\al,\be)^{\DD}\cong L(c_1,h_1,l_1)\otimes A'(\al_1,\be_1)^{\DD}$
if and only if $ ( c,h,l)= (c_1,h_1,l_1)$,  $\alpha-\alpha_1\in\Z$, and $\be=\be_1$ or $\{\be,\be_1\}=\{0,-1\}$.
    \item $L(c,h,l)\otimes A(\al,\be,\ga)\cong L(c_1,h_1,l_1)\otimes A(\al_1,\be_1,\ga_1)$
if and only if $ ( \be,\ga, c,h,l)= (\be_1,\ga_1,c_1,h_1,l_1)$ and $\alpha-\alpha_1\in\frac{\Z}2$.
\item $L(c,h,l)\otimes A(\al,\be,\ga)\not\cong L(c_1,h_1,l_1)\otimes A'(\al_1,\be_1)^\DD$.
\end{enumerate}
\end{theorem}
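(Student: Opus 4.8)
The plan is to handle the three parts separately, leaning on what has already been established. Part (2) is literally the content of Proposition \ref{main.16}, so nothing further is required there; I would simply cite it. The work lies in parts (1) and (3), and the two require genuinely different ideas: (1) is a reduction to the Virasoro algebra via the Heisenberg factorization, while (3) is settled by an intrinsic invariant of the $\HH$-action.

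For part (1) the strategy is to strip off the Heisenberg factor and reduce to the isomorphism problem for tensor products over $\VV$. The forward direction is clear, so I focus on the converse. First note that on $L(c,h,l)\otimes A'(\al,\be)^{\DD}$ the element ${\bf c}$ acts by $c$ and ${\bf l}$ acts by $l$, because $\HH$ (and the Virasoro central element of $A'(\al,\be)$) act by $0$ on $A'(\al,\be)^{\DD}$; hence any isomorphism forces $c=c_1$ and $l=l_1$, and in particular the cases $l=0$ and $l\neq 0$ cannot be confused. When $l=0$ we have $L(c,h,0)\cong L_{\VV}(c,h)^{\DD}$ with $\HH$ acting trivially, so $L(c,h,0)\otimes A'(\al,\be)^{\DD}\cong\big(L_{\VV}(c,h)\otimes A'(\al,\be)\big)^{\DD}$; since two $\DD$-modules with $\HH$ acting by $0$ are isomorphic exactly when their underlying $\VV$-modules are, the claim follows from the Virasoro classification together with Lemma \ref{main.1}, which describes when $A'(\al,\be)\cong A'(\al_1,\be_1)$. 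When $l\neq0$, I would use Lemma \ref{main.3} to write $L(c,h,l)\cong L_{\VV}(c-1,h-\frac{1}{16})^{\DD}\otimes M_{\HH}(l)^{\DD}$ and then regroup the three tensor factors: since $\HH$ acts nontrivially only on the $M_{\HH}(l)^{\DD}$ factor while the Virasoro action on the other two factors is the genuine one, a direct check on generators yields
$$L(c,h,l)\otimes A'(\al,\be)^{\DD}\cong\big(L_{\VV}(c-1,h-\tfrac{1}{16})\otimes A'(\al,\be)\big)^{\DD}\otimes M_{\HH}(l)^{\DD}.$$
Applying Lemma \ref{main.7}(2) (with the irreducible $\HH$-modules $M_{\HH}(l),M_{\HH}(l_1)$) then separates the Heisenberg factor, forcing $l=l_1$ and reducing the rest to $L_{\VV}(c-1,h-\frac{1}{16})\otimes A'(\al,\be)\cong L_{\VV}(c_1-1,h_1-\frac{1}{16})\otimes A'(\al_1,\be_1)$; the Virasoro isomorphism classification (cf. \cite{CGZ}, provable by the same shifting argument as in Proposition \ref{main.16}) then gives $c=c_1$, $h=h_1$, $\al-\al_1\in\Z$ and $\be=\be_1$ or $\{\be,\be_1\}=\{0,-1\}$.

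For part (3) the key is a dichotomy in the behavior of the operators $h_r$ for large $r$. On $M_2:=L(c_1,h_1,l_1)\otimes A'(\al_1,\be_1)^{\DD}$ each $h_r$ acts as $h_r\otimes\id$, since $\HH$ annihilates $A'(\al_1,\be_1)^{\DD}$; because $L(c_1,h_1,l_1)$ is a restricted $\DD$-module, \emph{every} vector of $M_2$ is killed by $h_r$ for all sufficiently large $r$. On $M_1:=L(c,h,l)\otimes A(\al,\be,\ga)$, by contrast, $h_r$ acts as $h_r\otimes\id+\id\otimes h_r$, and on $A(\al,\be,\ga)$ one has $h_rv_k=b_kv_{k+r}$ with $b_k\in\{1,\ga\}$ nonzero. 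Expanding a nonzero $v=\sum c_{\alpha,k}\,e_\alpha\otimes v_k$ in a weight basis $\{e_\alpha\}$ of $L(c,h,l)$ and choosing $r$ larger than the finitely many levels occurring, the first summand vanishes and $h_rv=\sum c_{\alpha,k}b_k\,e_\alpha\otimes v_{k+r}$, which is nonzero by linear independence of the $e_\alpha\otimes v_{k+r}$. Thus in $M_1$ \emph{no} nonzero vector is annihilated by all large $h_r$, whereas in $M_2$ every vector is; since a $\DD$-module isomorphism intertwines the $\HH$-action, $M_1\not\cong M_2$ for all parameter values, with no constraints needed.

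The routine steps are the generator-by-generator verification of the regrouping isomorphism in (1) and the linear-independence bookkeeping in (3). The only external input is the isomorphism classification of the Virasoro tensor products $L_{\VV}(c,h)\otimes A'(\al,\be)$, on which part (1) entirely rests; the single conceptual point is the $\HH$-restrictedness dichotomy that drives part (3), and I expect that dichotomy (ensuring it holds uniformly in all parameters, including degenerate $\ga$) to be the place demanding the most care.
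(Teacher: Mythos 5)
Your proposal is correct and follows essentially the same route as the paper: part (2) by citing Proposition \ref{main.16}, part (1) by the identical reduction through Lemma \ref{main.3}, Lemma \ref{main.7}(2) and the Virasoro tensor-product classification of \cite{CGZ} (with the $l=0$ and $l\neq0$ cases split exactly as in the paper), and part (3) by an $\HH$-annihilation invariant. The only cosmetic difference is in (3): the paper observes that $h_{\frac12}$ annihilates ${\overline{\bf 1'}}\otimes A'(\al_1,\be_1)^{\DD}$ but kills no nonzero weight vector of $L(c,h,l)\otimes A(\al,\be,\ga)$, whereas you run the same dichotomy with $h_r$ for all large $r$ via restrictedness; both versions are valid.
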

\begin{proof}
(1) The “if part” is trivial, we only need to show the “only if part”.
It is clear that $l=l_1$ from $L(c,h,l)\otimes A'(\al,\be)^{\DD}\cong L(c_1,h_1,l_1)\otimes A'(\al_1,\be_1)^{\DD}$.
If $l=l_1\ne 0$, using Lemmata \ref{main.3} and   \ref{main.7} we know that
\begin{eqnarray*}
&&L_{\VV}(c-1,h-\frac{1}{16})\otimes A'(\al,\be)\cong L_{\VV}(c_1-1,h_1-\frac{1}{16})\otimes A'(\al_1,\be_1),\\
&&M_{\HH}(l)\cong M_{\HH}(l_1).
\end{eqnarray*}
Thanks to \cite[Theorem 2]{CGZ},
$$L_{\VV}(c-1,h-\frac{1}{16})\cong L_{\VV}(c_1-1,h_1-\frac{1}{16}),\quad
A'(\al,\be)\cong A'(\al_1,\be_1).$$
Thus $ (c,h,l)= (c_1,h_1,l_1)$,  $\alpha-\alpha_1\in\Z$, $\be=\be_1$ or $\{\be,\be_1\}=\{0,-1\}$ by Lemma \ref{main.1}.

If $l=l_1=0$, then $L(c,h,l)\cong L_{\VV}(c,h)^{\DD}, L(c_1,h_1,l_1)\cong L_{\VV}(c_1,h_1)^{\DD}$.
Thus
$$L_{\VV}(c,h)\otimes A'(\al,\be)\cong L_{\VV}(c_1,h_1)\otimes A'(\al_1,\be_1).$$
So, $L_{\VV}(c,h)\cong L_{\VV}(c_1,h_1),A'(\al,\be)\cong A'(\al_1,\be_1)$ by Theorem 2 in \cite{CGZ}.
We also have $ ( \ga, c,h,l)= (\ga_1,c_1,h_1,l_1)$,  $\alpha-\alpha_1\in\Z$, $\be=\be_1$ or $\{\be,\be_1\}=\{0,-1\}$.

(2) This is  Proposition \ref{main.16}.

(3) From (\ref{eq1}) we know that $h_{\frac12}$ does not annihilate any weight vector in $L(c,h,l)\otimes A(\al,\be,\ga)$,  while it annihilates
${\overline{\bf 1'}}\otimes A'(\al_1,\be_1)^\DD$. So
$L(c,h,l)\otimes A(\al,\be,\ga)\not\cong L(c_1,h_1,l_1)\otimes A'(\al_1,\be_1)^\DD$.
\end{proof}

\section{Examples}

In this section, we show some concrete examples to illustrate  Theorem \ref{main.15}, and  to give irreducible weight modules with infinite dimensional weight spaces
over the mirror-twisted Heisenberg-Virasoro algebra $\DD$.

\begin{example}
We can find many irreducible Verma modules $M_{\VV}(c,h)$ over $\VV$ in \cite{A}.
For all these irreducible Verma modules, the tensor product
$M_{\VV}(c,h)^{\DD}\otimes A(\al,\be,\ga)$ is an irreducible weight $\DD$-module
with infinite dimensional weight spaces  by Theorem \ref{main.15} (3).
Thus we obtain irreducible weight modules with infinite dimensional weight spaces.
\end{example}

In the rest of this section, we present some examples for  the tensor product $\DD$-modules \\
$L(c,h,l)\otimes A(\al,\be,\ga)$ with $l\ne 0$. We will use $w_1$ to denote the highest weight vector of $M_{\VV}(c,h)$.
Let us  start with an  example where the   maximal proper submodule $J(c,h,l)$ of $M(c,h,l)$ is generated by one singular vector.

\begin{example}
Let   $l\ne 0$. Then
$M(2,\frac{1}{16},l)\cong M_{\VV}(1,0)^{\DD}\otimes M_{\HH}(l)^{\DD}$
by Lemma \ref{main.3}.
From \cite{A}, we know that the unique maximal proper submodule of $M_{\VV}(1,0)$ is generated by $d_{-1}w_1$. Using (\ref{Vertex}) we deduce that
the maximal proper submodule of $M(2,\frac{1}{16},l)$ is generated by $(d_{-1}-\frac{1}{2l}h_{-\frac{1}{2}}^2){\bf 1}$.
Therefore,
$L(2,\frac{1}{16},l)\otimes A(\al,\be,\ga)$ is irreducible
if and only if $$\rho_n(d_{-1}-\frac{1}{2l}h_{-\frac{1}{2}}^2)
=\be-\al+1+n-\frac{1}{2l}\ga\ne 0,\forall n\in\frac{\Z}{2}.$$
If $n=\al-\be-1+\frac{1}{2l}\ga\in \frac{\Z}{2}$, then $\overline{W^{(n+\frac{1}{2})}}$
is the unique minimal submodule of $L(2,\frac{1}{16},l)\otimes A(\al,\be,\ga)$.
Of course, it is the only irreducible submodule.
Moreover, $(L(2,\frac{1}{16},l)\otimes A(\al,\be,\ga))/\overline{W^{(n+\frac{1}{2})}}$ is a highest weight module
with highest weight vector ${\bf 1}\otimes y^{n}$ of weight $(2,\be-\frac{\ga}{2l}+\frac{17}{16},l)$.
By Lemma \ref{main.4}, we know that $(L(2,\frac{1}{16},l)\otimes A(\al,\be,\ga))/\overline{W^{(n+\frac{1}{2})}}$
is irreducible if  $\be-\frac{\ga}{2l}+1 \ne (\frac k2)^2$ for any $k\in\Z$.
\end{example}

\begin{example}
By Lemma \ref{main.3}, we have
$M(2,\frac{5}{16},l)\cong M_{\VV}(1,\frac{1}{4})^{\DD}\otimes M_{\HH}(l)^{\DD}$
when $l\ne 0$.
From \cite{A}, we know that the unique maximal proper submodule of $M_{\VV}(1,\frac{1}{4})$ is generated by $(d_{-1}^2-d_{-2})w_1$.
 Using (\ref{Vertex}) we deduce that the maximal proper submodule of $M(2,\frac{5}{16},l)$ is generated by
$Q{\bf 1}$ where $$Q=d_{-1}^2-d_{-2}-\frac{1}{l}d_{-1}h_{-\frac{1}{2}}^2
+\frac{1}{4l^2}h_{-\frac{1}{2}}^4+\frac{3}{2l}h_{-\frac{3}{2}}h_{-\frac{1}{2}}.$$
Therefore,
$L(2,\frac{5}{16},l)\otimes A(\al,\be,\ga)$ is irreducible
if and only if
$\rho_n(Q)\ne 0,\forall n\in\frac{\Z}{2}.$
By direct computation, we have following equations
\begin{eqnarray*}
&&\rho_n(d_{-1}^2)=(\al-\be-2-n)(\al-\be-1-n),\\
&&\rho_n(d_{-2})=-(\al-2\be-2-n),\\
&&\rho_n(d_{-1}h_{-\frac{1}{2}}^2)=-\ga(\al-\be-2-n),\\
&&\rho_n(h_{-\frac{1}{2}}^4)=\ga^2,\\
&&\rho_n(h_{-\frac{3}{2}}h_{-\frac{1}{2}})=\ga.
\end{eqnarray*}
Thus we have
\begin{eqnarray*}
&&\rho_n(Q)=\rho_n(d_{-1}^2-d_{-2}-\frac{1}{l}d_{-1}h_{-\frac{1}{2}}^2
+\frac{1}{4l^2}h_{-\frac{1}{2}}^4+\frac{3}{2l}h_{-\frac{3}{2}}h_{-\frac{1}{2}})\\
&=&(\al-\be-2-n)(\al-\be-1-n)+(\al-2\be-2-n)+\frac{\ga}{l}(\al-\be-2-n)+\frac{1}{4l^2}\ga^2+\frac{3}{2l}\ga.
\end{eqnarray*}
Solving the equation $\rho_n(Q)=0$, we get
$$n=(\al-\be-1+\frac{\ga}{2l})\pm\sqrt{\be-\frac{\ga}{2l}+1}.$$
Let $$n_1=(\al-\be-1+\frac{\ga}{2l})+\sqrt{\be-\frac{\ga}{2l}+1},\,\,\,\, n_2=(\al-\be-1+\frac{\ga}{2l})-\sqrt{\be-\frac{\ga}{2l}+1}.$$

$(1)$ We know that $L(2,\frac{5}{16},l)\otimes A(\al,\be,\ga)$ is irreducible if and only if $n_1,n_2\notin\frac{\Z}{2}$.

$(2)$ Suppose only one of  $n_1$ and $n_2$ is in $\frac{\Z}{2}$, say
$n_1\in\frac{\Z}{2}$.
Then $\overline{W^{(n_1+\frac{1}{2})}}$
is the unique irreducible submodule of $L(2,\frac{5}{16},l)\otimes A(\al,\be,\ga)$.
And the quotient module $(L(2,\frac{5}{16},l)\otimes A(\al,\be,\ga))/\overline{W^{(n_1+\frac{1}{2})}}$
is a highest weight module with highest weight
$(2,\be+\frac{21}{16}-\frac{\ga}{2l}-\sqrt{\be-\frac{\ga}{2l}+1},l)$.
By Lemma \ref{main.4}, we know that $(L(2,\frac{5}{16},l)\otimes A(\al,\be,\ga))/\overline{W^{(n_1+\frac{1}{2})}}$
is irreducible if further $\be-\frac{\ga}{2l}+\frac{5}{4}-\sqrt{\be-\frac{\ga}{2l}+1}
\ne(\frac{k}{2})^2$ for any $k\in\Z$.

$(3)$ Suppose that  $n_1,n_2\in\frac{\Z}{2}$. Without lose of generality,
we can assume that $n_1\leq n_2$,
then we have a sequence of submodules of $L(2,\frac{5}{16},l)\otimes A(\al,\be,\ga):$
$$0\subsetneq \overline{W^{(n_2+\frac{1}{2})}}\subsetneq \overline{W^{(n_2)}}\subseteq \overline{W^{(n_1)}}=L(2,\frac{5}{16},l)\otimes A(\al,\be,\ga)$$
where the inclusion $\overline{W^{(n_2)}}\subseteq \overline{W^{(n_1)}}$ is proper if and only if $n_1<n_2$.
Here, $\overline{W^{(n_2+\frac{1}{2})}}$
is the unique irreducible submodule of $L(2,\frac{5}{16},l)\otimes A(\al,\be,\ga)$
and $\overline{W^{(n_2)}}/\overline{W^{(n_2+\frac{1}{2})}}$ is a highest weight module
with highest weight $(2,\be+\frac{21}{16}-\frac{\ga}{2l}+\sqrt{\be-\frac{\ga}{2l}+1},l)$.
By Lemma \ref{main.4}, we know that $\overline{W^{(n_2)}}/\overline{W^{(n_2+\frac{1}{2})}}$ is irreducible if
$\be-\frac{\ga}{2l}+\frac{5}{4}+\sqrt{\be-\frac{\ga}{2l}+1}\ne(\frac{k}{2})^2$ for any $k\in\Z$.
As for quotient module $(L(2,\frac{5}{16},l)\otimes A(\al,\be,\ga))/\overline{W^{(n_2)}}$,
it is also a highest weight module with highest weight $(2,\be+\frac{29}{16}-\frac{\ga}{2l}+\sqrt{\be-\frac{\ga}{2l}+1},l)$ if $n_1<n_2$.
Otherwise, $L(2,\frac{5}{16},l)\otimes A(\al,\be,\ga)=\overline{W^{(n_2)}}$.

\end{example}
To conclude this section, we give an example such that
the unique maximal submodule $J(c,h,l)$ of $M(c,h,l)$ is generated by two singular vectors.
\begin{example}
Let $l\ne 0, c=h=0$. By Lemma \ref{main.3} we know that  $M(1,\frac{1}{16},l)\cong M_{\VV}(0,0)^{\DD}\otimes M_{\HH}(l)^{\DD}$.
 It is clear that the unique maximal submodule of $M_{\VV}(0,0)$
is generated by $d_{-1}w_1,d_{-2}w_1$.
 Using (\ref{Vertex}) we deduce that the maximal proper submodule of $M(1,\frac{1}{16},l)$ is generated by
$Q_1w$ and $Q_2{\bf 1}$ where $Q_1=d_{-1}-\frac{1}{2l}h_{-\frac{1}{2}}^2,Q_2=d_{-2}-\frac{1}{l}h_{-\frac{3}{2}}h_{-\frac{1}{2}}$.
We see that
$$\rho_n(Q_1)=\be-\al+1+n-\frac{1}{2l}\ga,\quad
\rho_n(Q_2)=2\be-\al+2+n-\frac{\ga}{l}.$$
Taking $\rho_n(Q_1)=\rho_n(Q_2)=0$, we have that
$n=\al\text{ and }\be+1=\frac{\ga}{2l}.$
Thus, if $n=\al\in\frac{\Z}{2}, \be+1-\frac{\ga}{2l}=0$,
then $\overline{W^{(\al+\frac{1}{2})}}$
is the unique irreducible submodule of $L(1,\frac{1}{16},l)\otimes A(\al,\be,\ga)$
and $\overline{W^{(\al)}}/\overline{W^{(\al+\frac{1}{2})}}$ is a highest weight module
with highest weight $(1,\frac{1}{16},l)$.
Otherwise, $L(1,\frac{1}{16},l)\otimes A(\al,\be,\ga)$ is irreducible.
\end{example}

\begin{remark}
For any $c,h,\al,\be\in\C,l,\ga\in\C^*$, it is generally hard to determine the structure of nontrivial quotient modules $\overline{W^{(k)}}/\overline{W^{(k+1)}}$ for the $\DD$-module $L(c,h,l)\otimes A(\al,\be,\ga)$.
We also have the similar questions   for the Virasoro algebra and Heisenberg-Virasoro algebra, see \cite{CGZ, LZ2}.
\end{remark}

{\bf Acknowledgments:}
The research in this paper was carried out during the visit of the first
author at Wilfrid Laurier University in 2019-2021.
The first author is partially supported by CSC of China (Grant 201906340096), and NSF of China (Grants 11771410 and 11931009).
The second author is partially supported by   NSFC (11871190) and NSERC (311907-2015).
The first author wants to thank Y. Ma for helpful suggestions
when he was preparing the paper.

\end{document}